\newcommand{\grad}{\nabla}
\newcommand{\inner}[2]{\left\langle #1 , #2 \right\rangle}
\numberwithin{equation}{section}
\newcommand{\diffat}[2]{\left.\frac{d}{d #1}\right\vert_{#1 = #2}}
\newcommand{\asympon}[1]{\overset{#1}\asymp}
\newcommand{\funcclass}{\mathcal{C}}
\newcommand{\cylinder}{D}
\newcommand{\ignorethis}[1]{}
\newcommand{\haar}{m}
\newcommand{\groupop}{\star}
\newcommand{\heis}{\mathbb{H}_1}
\begin{document}

\begin{frontmatter}

\title{Gradient Estimates for the Subelliptic Heat
    Kernel on H-type Groups}
\author{Nathaniel Eldredge}
\address{Department of Mathematics, Cornell University, 593 Malott
  Hall, Ithaca, NY 14853 USA}
\ead{neldredge@math.cornell.edu}
\ead[url]{http://www.math.cornell.edu/~neldredge/}


\begin{abstract}
  We prove the following gradient inequality for the subelliptic heat
  kernel on nilpotent Lie groups $G$ of H-type:
  \begin{equation*}
    \abs{\grad P_t f} \le K P_t(\abs{\grad f})
  \end{equation*}
  where $P_t$ is the heat semigroup corresponding to the sublaplacian
  on $G$, $\grad$ is the subelliptic gradient, and $K$ is a constant.
  This extends a result of H.-Q. Li \cite{li-jfa} for the Heisenberg
  group.  The proof is based on pointwise heat kernel estimates, and
  follows an approach used by Bakry, Baudoin, Bonnefont, and Chafa\"i
  \cite{bbbc-jfa}.
\end{abstract}


 \begin{keyword}
   heat kernel  \sep subelliptic \sep hypoelliptic \sep Heisenberg
   group \sep gradient

   \MSC 35H10 \sep 53C17
 \end{keyword}

\end{frontmatter}

\section{Introduction}

In \cite{li-jfa}, H.-Q. Li proved the following gradient inequality
for the heat kernel on the classical Heisenberg group $\heis$ of real
dimension $3$:
\begin{equation}\label{intro-grad-ineq}
  \abs{\grad P_t f} \le K P_t(\abs{\grad f})
\end{equation}
where $P_t$ is the heat semigroup corresponding to the usual
sublaplacian on $\heis$, $\grad$ is the corresponding
subgradient, $K$ is a constant, and $f$ is any appropriate smooth
function on $\heis$.  This was the first extension of
(\ref{intro-grad-ineq}) to a subelliptic setting; the elliptic case
was shown by Bakry \cite{bakry-riesz-notes-ii}, \cite{bakry-sobolev},
and in the case of a Riemannian manifold corresponds to a lower bound
on the Ricci curvature.



The proof in \cite{li-jfa} relies on pointwise upper and lower
estimates for the heat kernel, and a pointwise upper estimate for its
gradient, both of which were obtained in \cite{li-heatkernel} in the
context of Heisenberg groups of any dimension.  \cite{bbbc-jfa}
contains two alternate proofs of (\ref{intro-grad-ineq}) for the
classical Heisenberg group $\heis$, also depending on the pointwise heat
kernel estimates from \cite{li-heatkernel}.  Earlier, Driver and
Melcher in \cite{driver-melcher} had shown a partial result: that for
any $p > 1$ there exists a constant $K_p$ such that
\begin{equation}\label{intro-Lp-ineq}
  \abs{\grad P_t f}^p \le K_p P_t(\abs{\grad f}^p).
\end{equation}
Their argument proceeded probabilistically via methods of Malliavin
calculus and did not depend on heat kernel estimates, but they also
showed that it could not produce (\ref{intro-grad-ineq}),
which is the corresponding estimate with $p=1$.  \cite{tai-thesis}
extended the ``$L^p$-type'' inequality (\ref{intro-Lp-ineq}) to the case of a general
nilpotent Lie group, at the cost of replacing the constant $K_p$ with
a function $K_p(t)$.

In \cite{eldredge-precise-estimates}, we were able to show that
pointwise heat kernel estimates analogous to those of
\cite{li-heatkernel} (see (\ref{p1-estimate}--\ref{gradzp1-estimate}))
hold for Lie groups of H type, a class which generalizes the
Heisenberg groups while retaining some rather strong algebraic
properties.  (H-type groups were introduced by Kaplan in
\cite{kaplan80}; a useful reference and primer is Chapter 18 of
\cite{blu-book}.)  The purpose of the present article is to show that
given these heat kernel estimates, the first proof from \cite{bbbc-jfa}
can be adapted to establish the inequality (\ref{intro-grad-ineq}) in
the setting of H-type groups.  Our proof approximately follows the
structure of the first proof from \cite{bbbc-jfa} but may be read
independently of it, and is more explicitly detailed.

\section{Definitions and notation}


In order to fix notation, we give a definition of H-type groups and
accompanying concepts.  Our notation, where applicable, matches that
of \cite{eldredge-precise-estimates}.

A finite-dimensional Lie algebra $\mathfrak{g}$ (with nonzero center
$\mathfrak{z}$), together with an inner product $\inner{\cdot}{\cdot}$, is said to be of \emph{H type} or \emph{Heisenberg
  type} if the following conditions hold:
\begin{enumerate}
\item $[\mathfrak{z}^\perp, \mathfrak{z}^\perp] = \mathfrak{z}$; and
\item For each $z \in \mathfrak{z}$, the map $J_z :
  \mathfrak{z}^\perp \to \mathfrak{z}^\perp$ defined by
  \begin{equation}\label{Jz-relation}
    \inner{J_z x}{y} = \inner{z}{[x,y]} \quad \text{for $x,y \in \mathfrak{z}^\perp$}
  \end{equation}
  is an orthogonal map when $\inner{z}{z}=1$.
\end{enumerate}
A connected, simply connected Lie group $G$ is said to be of H type if
its Lie algebra $\mathfrak{g}$ is equipped with an inner product
satisfying the above conditions.

It is easy to see that an H-type Lie algebra (respectively, Lie group) is
a step 2 stratified nilpotent Lie algebra (Lie group).  The special case
$m=1$ produces the isotropic Heisenberg or Heisenberg-Weyl groups, and
the case $n=m=1$ gives the classical Heisenberg group $\heis$ of dimension $3$
discussed in \cite{bbbc-jfa}.

As usual, $G$ can be identified as a set with $\mathfrak{g}$, taking
the exponential map to be the identity.  By fixing an orthonormal
basis for $\mathfrak{g} = \mathfrak{z}^\perp \oplus \mathfrak{z}$, we
can identify $G$ and $\mathfrak{g}$ with Euclidean space equipped with
an appropriate bracket, as the following proposition states.  (The proof
is uncomplicated.)

\begin{proposition}
  If $G$ is an H-type Lie group identified with its Lie algebra
  $\mathfrak{g}$, then there exist integers $n, m > 0$, a bracket
  operation $[\cdot,\cdot]$ on $\R^{2n+m} = \R^{2n} \times \R^m$, and a map $T :
  G \to \R^{2n+m}$ such that $T : \mathfrak{g} \to (\R^{2n+m}, [\cdot,\cdot])$
  is a Lie algebra isomorphism, $T \mathfrak{z} = 0 \times \R^m$, and
  $T$ is an isometry with respect to the inner product
  $\inner{\cdot}{\cdot}$ on $\mathfrak{g}$ and the usual Euclidean
  inner product on $\R^{2n+m}$.  If we define a group operation
  $\groupop$ on $\R^{2n+m}$ as usual via $v \groupop w = v + w +
  \frac{1}{2}[v,w]$, then $T : G \to (\R^{2n+m}, \groupop)$ is a Lie
  group isomorphism, which maps the center of $G$ to $0 \times
  \R^{m}$.  The identity of $G$ is $0$ and the group inverse is given
  by $g^{-1}=-g$.
\end{proposition}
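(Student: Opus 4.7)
The plan is to construct $T$ as the coordinate map associated to an orthonormal basis of $\mathfrak{g}$ adapted to the splitting $\mathfrak{g} = \mathfrak{z}^\perp \oplus \mathfrak{z}$, and then to transport both the Lie bracket and the Lie group operation through $T$. The only ingredient beyond linear algebra is that $\mathfrak{z}^\perp$ has even dimension; everything else is routine bookkeeping.

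To establish the even-dimensionality, I would fix any unit vector $z_0 \in \mathfrak{z}$ and observe that $J_{z_0}$ is skew-symmetric:
\begin{equation*}
  \inner{J_{z_0} x}{y} = \inner{z_0}{[x,y]} = -\inner{z_0}{[y,x]} = -\inner{J_{z_0} y}{x}.
\end{equation*}
Being simultaneously skew-symmetric and orthogonal, $J_{z_0}$ satisfies $J_{z_0}^2 = -I$, a complex structure on $\mathfrak{z}^\perp$. Hence $\dim \mathfrak{z}^\perp = 2n$ for some integer $n > 0$, and we set $m = \dim \mathfrak{z} > 0$.

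Next I would choose an orthonormal basis $e_1, \dots, e_{2n}$ of $\mathfrak{z}^\perp$ and an orthonormal basis $f_1, \dots, f_m$ of $\mathfrak{z}$, and let $T : \mathfrak{g} \to \R^{2n+m}$ be the linear map sending these vectors, in order, to the standard basis of $\R^{2n} \times \R^m$. By construction $T$ is a linear isometry with $T\mathfrak{z} = 0 \times \R^m$. Declaring $[Tv, Tw] := T[v,w]$ defines a bracket on $\R^{2n+m}$ for which $T$ is tautologically a Lie algebra isomorphism.

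Finally, since $G$ is connected, simply connected, and nilpotent, the exponential map $\exp : \mathfrak{g} \to G$ is a diffeomorphism; this is the identification of $G$ with $\mathfrak{g}$ used throughout. Because $\mathfrak{g}$ is step $2$, all iterated brackets of length $\geq 3$ vanish, so the Baker--Campbell--Hausdorff series reduces to $\exp(v)\exp(w) = \exp\bigl(v + w + \frac{1}{2}[v,w]\bigr)$. Transporting this product through $T$ yields exactly $\groupop$ on $\R^{2n+m}$, so $T : G \to (\R^{2n+m}, \groupop)$ is a Lie group isomorphism. The identity corresponds to $0 \in \mathfrak{g}$, and the check $v + (-v) + \frac{1}{2}[v,-v] = 0$ gives $g^{-1} = -g$. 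Since $\exp$ carries the center of $\mathfrak{g}$ onto the center of $G$, $T$ sends the center of $G$ to $T\mathfrak{z} = 0 \times \R^m$. The only conceptual point is the even-dimensionality of $\mathfrak{z}^\perp$; the rest is transport of structure, which is presumably why the authors deem the proof uncomplicated.
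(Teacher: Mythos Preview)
Your proof is correct and supplies exactly the kind of argument the paper has in mind; the paper itself offers no proof beyond the parenthetical remark that it ``is uncomplicated.'' Your identification of the one nontrivial point---that $J_{z_0}^2 = -I$ forces $\dim\mathfrak{z}^\perp$ to be even---together with the routine transport of structure via an adapted orthonormal basis and the truncated Baker--Campbell--Hausdorff formula, is precisely the expected approach.
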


Henceforth we make this identification, and assume that our Lie group
$G$ is just $\R^{2n+m}$ with an appropriate bracket $[\cdot,\cdot]$
and corresponding group operation $\groupop$.  We let $\{e_1, \dots,
e_{2n}\}$ denote the standard orthonormal basis for $\R^{2n} \times 0
\subset G$, and $\{u_1, \dots, u_m\}$ the standard orthonormal basis
for $0 \times \R^m \subset G$, and write elements of $G$ as $g = (x,z)
= \sum_i x^i e_i + \sum_j z^j u_j$.  The maps $J_z$ can then be
identified with skew-symmetric $2n \times 2n$ matrices, which are
orthogonal when $\abs{z} = 1$.

We remark a few obvious consequences of (\ref{Jz-relation}):
\begin{proposition} \label{Jz-prop}
\begin{enumerate}
\item $J_z$ depends linearly on $z$;
\item $\abs{J_z x} = \abs{z} \abs{x}$, and by polarization
  $\inner{J_z x}{J_w x} = \inner{z}{w}\abs{x}^2$ and $\inner{J_z
  x}{J_z y} = \abs{z}^2 \inner{x}{y}$;
\item $\inner{J_z x}{x} = 0$, so $J_z^* = -J_z$. \label{Jz-skew}
\item $J_z^2 = -\abs{z}^2 I$. \label{Jz-square}
\end{enumerate}
\end{proposition}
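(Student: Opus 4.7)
The plan is to verify the four items in order, each essentially unfolding the defining relation~(\ref{Jz-relation}) together with the orthogonality hypothesis on $J_z$ when $\abs{z}=1$. None of the steps should require more than a few lines of computation; the whole proposition is a standard unwinding, and the author already flags the proof as uncomplicated.

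For item (1), I would use the fact that the right-hand side of~(\ref{Jz-relation}) is manifestly linear in $z$ (the bracket is fixed and the inner product is bilinear). Since (\ref{Jz-relation}) holds for \emph{every} $y \in \mathfrak{z}^\perp$ and the inner product is non-degenerate, linearity in $z$ transfers from $\inner{z}{[x,y]}$ to $J_z x$. For item (3), set $y=x$ in~(\ref{Jz-relation}); the right-hand side becomes $\inner{z}{[x,x]}=0$, so $\inner{J_z x}{x}=0$ for all $x$, and polarizing this identity gives $\inner{J_z x}{y} = -\inner{J_z y}{x}$, i.e.\ $J_z^* = -J_z$.

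Item (2) is where the orthogonality assumption enters: when $\abs{z}=1$ we have $\abs{J_z x}=\abs{x}$ by definition, and then for general nonzero $z$ I would combine linearity from (1) with $J_{z/\abs{z}}$ to conclude $\abs{J_z x} = \abs{z}\abs{x}$. The polarization identities $\inner{J_z x}{J_w x} = \inner{z}{w}\abs{x}^2$ and $\inner{J_z x}{J_z y} = \abs{z}^2 \inner{x}{y}$ then follow by polarizing $\abs{J_z x}^2 = \abs{z}^2\abs{x}^2$ in $z$ and in $x$ respectively. Finally, for item (4), combine items (2) and (3): for any $x$,
\begin{equation*}
  \inner{-J_z^2 x}{x} = \inner{J_z x}{J_z x} = \abs{z}^2 \abs{x}^2 = \inner{\abs{z}^2 x}{x}.
\end{equation*}
Both $-J_z^2$ and $\abs{z}^2 I$ are symmetric (a square of a skew map is symmetric), so equality of their associated quadratic forms forces operator equality, giving $J_z^2 = -\abs{z}^2 I$.

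I do not anticipate any real obstacle. The only spot requiring a moment's care is item (4), where one must remember to invoke the symmetry of $J_z^2$ to promote the quadratic-form identity to an operator identity; otherwise one would only have a weaker diagonal statement. Everything else is essentially a direct substitution into~(\ref{Jz-relation}).
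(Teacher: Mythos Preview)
Your proposal is correct; the paper itself provides no proof, introducing the proposition only as ``a few obvious consequences of (\ref{Jz-relation}).'' Your argument fills in precisely the routine verifications the author leaves implicit, and each step is sound.
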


We note that Lebesgue measure $\haar$ on $\R^{2n+m} = G$ is
bi-invariant under the group operation, and thus $\haar$ can be taken
as the Haar measure on the locally compact group $G$.

For $i = 1, \dots, 2n$, let $X_i$ be the unique left-invariant vector
field on $G$, and $\hat{X}_i$ the unique right-invariant vector field,
such that $X_i(0) = \hat{X}_i(0) = \frac{\partial}{\partial x^i}$.  We
can write
\begin{equation}\label{Xi-dds}
  X_i f(g) = \diffat{s}{0} f(g \groupop (s e_i, 0)),
  \quad   \hat{X}_i f(g) = \diffat{s}{0} f((s e_i,
  0) \groupop g).
\end{equation}
A
straightforward calculation shows
\begin{equation}\label{Xi-formula}
  \begin{split}
    X_i &= \frac{\partial}{\partial x^i} + \frac{1}{2} \sum_{j=1}^m
    \inner{J_{u_j}x}{e_i} \frac{\partial}{\partial z^j} \\
    \hat{X}_i &= \frac{\partial}{\partial x^i} - \frac{1}{2} \sum_{j=1}^m
    \inner{J_{u_j}x}{e_i} \frac{\partial}{\partial z^j}
  \end{split}
\end{equation}
We note that $[X_i, \hat{X}_j] = 0$ for all $i,j$.

As a consequence of the H-type property, the collection $\{X_i(g),
[X_i, X_j](g) : i,j = 1, \dots, 2n\} \subset T_g G$ spans $T_g G$ for
each $g \in G$.  Such a collection is said to be
\emph{bracket-generating}.

The left-invariant \emph{subgradient} $\grad$ on $G$ is given by
$\grad f = (X_1 f, \dots, X_{2n} f)$, with the right-invariant
$\hat{\grad}$ defined analogously.  We shall also use the notation
$\grad_x f := \left( \frac{\partial}{\partial x^1}f,\dots,
\frac{\partial}{\partial x^{2n}}f\right)$ and $\grad_z f := \left(
\frac{\partial}{\partial z^1}f,\dots, \frac{\partial}{\partial
  z^m}f\right)$ to denote the usual Euclidean gradients in the $x$ and
$z$ variables, respectively.  Note that $\grad_z$ is both left- and
right-invariant.  From (\ref{Xi-formula}) it is easy to verify that
\begin{equation}\label{gradient-combos}
  \begin{split}
    \grad f(x,z) &= \grad_x f(x,z) + \frac{1}{2} J_{\grad_z f(x,z)} x
    \\
    \hat{\grad} f(x,z) &= \grad_x f(x,z) - \frac{1}{2} J_{\grad_z f(x,z)} x
  \end{split}
\end{equation}
In particular, since $J_z$ depends linearly on $z$ and is orthogonal
for $\abs{z}=1$, we have 
\begin{equation}\label{abs-grad-difference}
\abs{(\grad -
  \hat{\grad})f(x,z)} = \abs{x} \abs{\grad_z f(x,z)}.
\end{equation}
We shall make use of this fact later.

The left-invariant \emph{sublaplacian} $L$ is the second-order
differential operator defined by $L = X_1^2 + \dots + X_{2n}^2$; $L$
is subelliptic but not elliptic.  By a renowned theorem due to
H\"ormander \cite{hormander67}, the bracket-generating condition
implies that $L$ is hypoelliptic, so that if $L f \in C^\infty$ then $f
\in C^\infty$; the same holds for the heat operator $L -
\frac{\partial}{\partial t}$.  $L$ is an essentially self-adjoint
operator on $L^2(\haar)$, and we let $P_t := e^{tL}$ be the heat semigroup
corresponding to $L$. $P_t$ has a convolution kernel $p_t$, so that
\begin{equation}\label{convolution}
P_t f(g) =
\int_G f(g \groupop k) p_t(k)\, d\haar(k).
\end{equation}
By hypoellipticity, $p_t$ is a smooth function on $G$.  An explicit
formula for $p_t$ is known:
\begin{equation}\label{pt-formula}
  p_t(x,z) = (2\pi)^{-m}(4\pi)^{-n} \int_{\R^m}
  e^{i\inner{\lambda}{z}-\frac{1}{4}\abs{\lambda} \coth(t
    \abs{\lambda}) \abs{x}^2} \left(\frac{\abs{\lambda}}{\sinh( t
    \abs{\lambda})}\right)^{n} \,d\lambda.
\end{equation}
See, among others, \cite{randall} for a derivation of
(\ref{pt-formula}).  We note in particular that $p_t$ is a radial
function; i.e. $p_t(x,z)$ is a function of $\abs{x}, \abs{z}$.  This
is unsurprising in light of the fact, easily verified, that $L$ maps
radial functions to radial functions.

For $\alpha > 0$, define the \emph{dilation} $\varphi_\alpha : G \to
G$ by $\varphi_\alpha(x,z) = (\alpha x, \alpha^2 z)$; then
$\varphi_\alpha$ is a group automorphism of $G$.  A straightforward
computation shows that $X_i (f \circ \varphi_\alpha) = \alpha (X_i f)
\circ \varphi_\alpha$, and $P_t(f \circ \varphi_\alpha) = (P_{\alpha^2
  t} f) \circ \varphi_\alpha$.

We now make some definitions concerning the geometry of $G$.  An
absolutely continuous path $\gamma : [0,1] \to G$ is said to be
\emph{horizontal} if there exist absolutely continuous $a_i : [0,1]
\to \R$ such that $\dot{\gamma}(t) = \sum_{i=1}^{2n} a_i(t)
X_i(\gamma(t))$.  In such a case the \emph{speed} of $\gamma$ is given
by $\norm{\dot{\gamma}(t)} := \left(\sum_{i=1}^{2n}
a_i(t)^2\right)^{1/2}$.  (This corresponds to taking a subriemannian
metric on $G$ such that $\{X_i\}$ are an orthonormal frame for the
horizontal bundle; see \cite{montgomery} for an exposition of these
ideas from subriemannian geometry.)  The \emph{length} of $\gamma$ is
defined as $\ell[\gamma] := \int_0^1 \norm{\dot{\gamma}(t)}\,dt$.  The
\emph{Carnot-Carath\'eodory distance} between two points $g, h \in G$
is
\begin{equation*}
  d(g,h) := \inf\left\{\ell[\gamma] : \gamma \text{ horizontal}, \gamma(0)
  = g, \gamma(1) = h\right\}.
\end{equation*}
By the left-invariance of the vector fields $X_i$, it follows that
$d(g,h) = d(kg, kh)$.  

By Chow's theorem, the bracket-generating condition implies that
$d(g,h) < \infty$ for all $g,h \in G$.  An explicit formula for $d$
and for length-minimizing paths (geodesic) can be found in
\cite{eldredge-precise-estimates}.  For the moment we note that $d(0,
(x,z)) \asymp \abs{x} + \abs{z}^{1/2}$, where the symbol $\asymp$ is
defined as follows.

\begin{notation}
  If $X$ is a set, and $a, b : X \to \R$ are real-valued functions on
  $X$, we write $a \asymp b$ to mean that there exist positive finite
  constants $C_1, C_2$ such that $C_1 b(x) \le a(x) \le C_2 b(x)$ for
  all $x \in X$.  We will also write $a \asympon{X} b$ if the domain
  where the estimates hold is not obvious from context.
\end{notation}

We will make extensive use of the following precise pointwise estimates on the
heat kernel $p_t$, which were obtained in
\cite{eldredge-precise-estimates} by using the explicit formula (\ref{pt-formula}):
\begin{align}
  p_1(x,z) &\asymp \frac{1 + (d(0,(x,z)))^{2n-m-1}}{1+(\abs{x}d(0,(x,z)))^{n-\frac{1}{2}}}
  e^{-\frac{1}{4}d(0,(x,z))^2} \label{p1-estimate}\\
  \abs{\grad p_1(x,z)} &\le C(1+d(0,(x,z)))p_1(x,z) \label{gradp1-estimate} \\
  \abs{\grad_z p_1(x,z)} &\le C p_1(x,z). \label{gradzp1-estimate}
  \intertext{We can combine (\ref{gradp1-estimate}) and
    (\ref{gradzp1-estimate}) using (\ref{abs-grad-difference}) to
    obtain}
   \abs{\hat{\grad} p_1(x,z)} &\le C(1+d(0,(x,z)))p_1(x,z). \label{gradhatp1-estimate} 
\end{align}

Let $\funcclass$ be the class of $f \in C^1(G)$ for which there exist
constants $M \ge 0$, $a \ge 0$, and $\epsilon \in (0,1)$ such that
\begin{equation*}
  \abs{f(g)} + \abs{\grad f(g)} + \abs{\hat{\grad} f(g)} \le M e^{a d(0,g)^{2-\epsilon}}
\end{equation*}
for all $g \in G$.  By the heat kernel bounds (\ref{p1-estimate}), the
convolution formula (\ref{convolution}) makes sense for all $f \in
\funcclass$, and thus we shall treat (\ref{convolution}) as the
definition of $P_t f$ for $f \in \funcclass$.  It is easy to see, by
the translation invariance of the Haar measure $\haar$, that $P_t$
remains left invariant under this definition.

The main theorem of this article is the following:
\begin{theorem}\label{main-grad-theorem}
  There exists a finite constant $K$ such that for all $f \in
  \funcclass$,
  \begin{equation}\label{grad-ineq}
    \abs{\grad P_t f} \le K P_t(\abs{\grad f}).
  \end{equation}
\end{theorem}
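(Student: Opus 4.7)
My plan is to follow the structure of the first proof in \cite{bbbc-jfa}, adapted from the classical Heisenberg group to general H-type groups, relying crucially on the sharp pointwise estimates (\ref{p1-estimate})--(\ref{gradhatp1-estimate}). First, by the scaling identities $X_i(f\circ\varphi_\alpha)=\alpha(X_i f)\circ\varphi_\alpha$ and $P_t(f\circ\varphi_\alpha)=(P_{\alpha^2 t}f)\circ\varphi_\alpha$ noted in the excerpt, (\ref{grad-ineq}) at arbitrary $t>0$ reduces to the case $t=1$ with the same constant $K$; I therefore focus on this case.

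At $t=1$, I exploit the fact that the right-invariant field $\hat X_i$ commutes with the left-invariant sublaplacian $L$ (and hence with $P_1$), and that the central derivatives $\partial_{z^j}$ likewise commute with $L$. Using the decomposition $X_i=\hat X_i+\sum_j\inner{J_{u_j}x}{e_i}\partial_{z^j}$ from (\ref{Xi-formula}), this yields the identity
\begin{equation*}
\grad P_1 f(g) = P_1(\hat\grad f)(g) + J_{P_1(\grad_z f)(g)}\,x(g).
\end{equation*}
Combining with (\ref{abs-grad-difference}) (so $\abs{\hat\grad f}\le\abs{\grad f}+\abs{x}\abs{\grad_z f}$) and $\abs{J_z y}=\abs{z}\abs{y}$, I obtain
\begin{equation*}
\abs{\grad P_1 f(g)} \le P_1(\abs{\grad f})(g) + P_1(\abs{x(\cdot)}\abs{\grad_z f})(g) + \abs{x(g)}\,P_1(\abs{\grad_z f})(g),
\end{equation*}
reducing the theorem to bounding the two ``vertical'' residuals on the right by a constant multiple of $P_1(\abs{\grad f})(g)$.

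To control these residuals I plan to integrate by parts in the central variables $z^j$ (which are both left- and right-invariant directions on $G$), transferring each $\partial_{z^j}$ from $f$ onto $p_1$. By (\ref{gradzp1-estimate}), the resulting kernels are pointwise bounded by $Cp_1$, at the cost of extra factors of $\abs{x(\cdot)}$ in the integrand. What remains is to compare these weighted convolutions against $P_1(\abs{\grad f})(g)$. Starting from the convolution formula (\ref{convolution}) and using the vanishing $\int_G\hat X_i p_1\,d\haar=0$, the residuals can be re-expressed in terms of differences $f(g\groupop h)-f(g)$, which are then bounded by line integrals of $\grad f$ along horizontal geodesics joining $0$ to $h$; using (\ref{gradhatp1-estimate}), this produces weighted double integrals with kernels of order $d(0,h)(1+d(0,h))p_1(h)$.

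The main obstacle is this final absorption step: showing that these polynomially weighted integrals are dominated by $K' P_1(\abs{\grad f})(g)$. In the classical Heisenberg case \cite{bbbc-jfa}, additional rotational symmetries substantially simplify the analysis. For general H-type groups, I expect to need to split the integration domain into a ``horizontally dominated'' regime $\abs{x(h)}\asymp d(0,h)$ and a complementary ``vertically dominated'' regime $\abs{z(h)}^{1/2}\gg\abs{x(h)}$, and in each to use the explicit polynomial prefactors in the sharp two-sided bound (\ref{p1-estimate}) --- with their precise dependence on $n$ and $m$ from \cite{eldredge-precise-estimates} --- to make the estimates close. Managing these region-dependent estimates so that the resulting constant $K$ is uniform in $g$ is the technical crux of the adaptation from the Heisenberg to the H-type setting.
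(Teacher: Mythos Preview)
Your outline has two concrete gaps. First, you never reduce to $g=0$ via left-invariance of $\grad$ and $P_t$; as written, the residual $\abs{x(g)}\,P_1(\abs{\grad_z f})(g)$ carries an unbounded factor $\abs{x(g)}$, and no uniform $K$ can absorb it. The paper makes this reduction immediately (see the discussion after the statement of Theorem \ref{main-grad-theorem}), and at $g=0$ this term simply vanishes. Second, and more seriously, your triangle-inequality step places the absolute value \emph{inside} $P_1$: even at $g=0$ you are left needing $\int_G\abs{(\grad-\hat{\grad})f}\,p_1\le C\int_G\abs{\grad f}\,p_1$, and then your plan to ``integrate by parts in $z$'' is blocked, since the integrand is $\abs{\grad_z f}$, not $\grad_z f$. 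The paper instead keeps the absolute value \emph{outside}: using $\grad=\hat{\grad}$ at $0$ together with $\hat{\grad} P_1 f(0)=P_1\hat{\grad} f(0)$, it reduces to $\bigl|\int_G(\grad-\hat{\grad})f\,p_1\,d\haar\bigr|$, which \emph{does} integrate by parts to $\bigl|\int_G f\,(\grad-\hat{\grad})p_1\,d\haar\bigr|$.

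From that point the paper's route is structurally different from what you sketch. The central tool you do not mention is a global weighted Cheeger inequality $\int_G\abs{f-m_f}\,p_1\,d\haar\le C\int_G\abs{\grad f}\,p_1\,d\haar$ (Corollary \ref{cheeger-combined}), built from Jerison's Poincar\'e inequality on the unit ball together with a geodesic-integral bound on $B^C$ in the $(u,\eta)$ coordinates (Lemma \ref{geodesic-integral-bound}); the latter is where the region splitting and the sharp estimates (\ref{p1-estimate}) actually enter. With this Cheeger inequality in hand, the paper does not attempt a direct pointwise absorption of polynomially weighted kernels as you propose; it instead decomposes $G$ into a cylinder $\{\abs{x}\le R\}$, handled via (\ref{gradzp1-estimate}) and Corollary \ref{cheeger-combined}, and its complement, handled via a projection matrix $T(x)$ onto $\spanop\{J_{u_1}x,\dots,J_{u_m}x\}$ satisfying $T\grad p_1=\tfrac12(\grad-\hat{\grad})p_1$ and $\abs{\nabla\cdot T}\le C/\abs{x}$, followed by a divergence-form integration by parts. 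Your final ``absorption step'' is too vague to substitute for either of these; in particular, nothing in your outline produces the Poincar\'e-type control needed to convert $\int_G\abs{f}\,p_1\,d\haar$ back into $\int_G\abs{\grad f}\,p_1\,d\haar$.
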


Following an argument found in \cite{driver-melcher}, by
left-invariance of $P_t$ and $\grad$, we see that in order to
establish (\ref{grad-ineq}) it suffices to show that it holds at the
identity, i.e. to show
\begin{equation}\label{grad-ineq-identity}
\abs{(\grad P_t f)(0)} \le K P_t(\abs{\grad f})(0).
\end{equation}
It also suffices to assume $t=1$.  This can be seen by taking $t=1$ in
 (\ref{grad-ineq-identity}) and replacing $f$ by $f \circ \varphi_{s^{1/2}}$.    


Therefore, in order to prove Theorem \ref{main-grad-theorem}, it will
suffice to show $\abs{(\grad P_1 f)(0)} \le K P_1(\abs{\grad f})(0)$.
We may replace $\grad$ by $\hat{\grad}$ on the left side, since $\grad
= \hat{\grad}$ at $0$.  Since $[X_i, \hat{X}_j] =
0$, we expect that $\hat{\grad}$ should commute with $P_t$, which we
now verify.

\begin{proposition}\label{commute}
  For $f \in \funcclass$, $\hat{\grad} P_t f(0)=(P_t \hat{\grad}f )(0)$.
\end{proposition}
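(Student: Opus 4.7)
The plan is to expand both sides using the convolution formula for $P_t$ and the right-invariant-derivative characterization of $\hat{X}_i$, then justify exchanging derivative and integral by dominated convergence; the growth class $\funcclass$ and the Gaussian decay of $p_t$ are precisely matched so that this is allowed. Since $\hat{\grad} = (\hat{X}_1, \ldots, \hat{X}_{2n})$, it suffices to establish the identity componentwise: for each $i$, show that $\hat{X}_i P_t f(0) = P_t(\hat{X}_i f)(0)$.

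Starting from $P_t f(g) = \int_G f(g \groupop k) p_t(k) \, d\haar(k)$ and the second identity in (\ref{Xi-dds}), I would write
\begin{equation*}
 \hat{X}_i P_t f(0) = \diffat{s}{0} P_t f((s e_i, 0)) = \diffat{s}{0} \int_G f((s e_i, 0) \groupop k)\, p_t(k) \, d\haar(k).
\end{equation*}
If we are permitted to pass the derivative inside, the integrand is differentiated in $s$ at $s=0$, giving $\hat{X}_i f(k)$ directly from (\ref{Xi-dds}), and the right-hand side becomes $\int_G (\hat{X}_i f)(k) p_t(k) \, d\haar(k) = P_t(\hat{X}_i f)(0)$, which is what we want.

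The only real work is justifying the exchange. By the mean value theorem applied to $s \mapsto f((s e_i, 0) \groupop k)$ (whose $s$-derivative is $(\hat{X}_i f)((s e_i, 0) \groupop k)$), for $|s| \le 1$ one has
\begin{equation*}
 \frac{\abs{f((s e_i, 0) \groupop k) - f(k)}}{\abs{s}} \le \sup_{\abs{r} \le 1} \abs{\hat{X}_i f((r e_i, 0) \groupop k)}.
\end{equation*}
Since $f \in \funcclass$, we have $\abs{\hat{X}_i f(h)} \le M e^{a d(0,h)^{2-\epsilon}}$, and the left-invariance of $d$ together with the triangle inequality yields $d(0, (r e_i, 0) \groupop k) \le d(0,(re_i,0)) + d(0,k) \le 1 + d(0,k)$ for $\abs{r}\le 1$. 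Hence the difference quotient is dominated by $M e^{a(1 + d(0,k))^{2-\epsilon}}$ uniformly in $s$.

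The dominated function must then be integrated against $p_t(k)$; by rescaling via $\varphi_{t^{1/2}}$ it suffices to treat $t=1$, and the estimate (\ref{p1-estimate}) shows that $p_1(k)$ decays like a polynomial in $d(0,k)$ times $e^{-d(0,k)^2/4}$. Because $\epsilon>0$, the exponent $a(1+d(0,k))^{2-\epsilon}$ grows strictly more slowly than $d(0,k)^2$, so the product is integrable. Dominated convergence then legitimizes the swap and completes the proof. The main (modest) obstacle is precisely this integrability check, which is why the class $\funcclass$ was defined with the strict inequality $\epsilon>0$ in the exponent.
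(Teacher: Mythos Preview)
Your proposal is correct and follows essentially the same argument as the paper: both write $\hat{X}_i P_t f(0)$ as a derivative of the convolution integral, bound the $s$-derivative (equivalently, the difference quotient) of $f((s e_i,0)\groupop k)$ by $M e^{a(1+d(0,k))^{2-\epsilon}}$ via the $\funcclass$ hypothesis and the triangle inequality for $d$, and then invoke the Gaussian decay in (\ref{p1-estimate}) to justify differentiation under the integral sign. Your explicit reduction to $t=1$ by dilation is a detail the paper leaves implicit but is otherwise the same idea.
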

\begin{proof}
  By (\ref{Xi-dds}) and (\ref{convolution}) we have
  \begin{align*}
    \hat{X}_i P_t f(0) &= \diffat{s}{0} P_t f(s
    e_i, 0) \\
    &= \diffat{s}{0} \int_G f((s e_i, 0)
    \groupop k) p_t(k)\,d\haar(k).
  \end{align*}
  We now differentiate under the integral sign, which can be justified
  because
  \begin{align*}
    \abs{\frac{d}{ds} f((s e_i, 0) \groupop k)} &=
    \abs{\diffat{\sigma}{0} f(((s+\sigma)e_i, 0)
    \groupop k)} \\
    &= \abs{\diffat{\sigma}{0} f((\sigma e_i, 0)
    \groupop (s e_i, 0) \groupop k)} \\
    &= \abs{\hat{X_i} f((s e_i, 0) \groupop k)} \\
    &\le M e^{a d(0, (s e_i, 0) \groupop k)^{2-\epsilon}}.
  \end{align*}
  But 
  \begin{align*}
    d(0, (s e_i, 0) \groupop k) &= d((s e_i, 0)^{-1}, k) = d((-s e_i,
    0), k) \\
    &\le d(0, (-s e_i, 0)) + d(0, k) = \abs{s} + d(0,k).
\end{align*}
  Thus for all
  $s \in [-1,1]$ we have
  \begin{equation*}
    \abs{\frac{d}{ds} f((s e_i, 0) \groupop k)} \le M e^{a (1 +
      d(0,k))^{2-\epsilon}} \le M' e^{a' d(0,k)^{2-\epsilon}}
  \end{equation*}
  for some $M', a'$, and therefore by the heat kernel bounds
  (\ref{p1-estimate}) we have
  \begin{equation*}
    \int_G \sup_{s \in [-1,1]} \abs{\frac{d}{ds} f((s e_i, 0) \groupop
      k)} p_t(k)\,d\haar(k) < \infty
  \end{equation*}
  which justifies differentiating under the integral sign.  Thus
  \begin{align*}
    \hat{X}_i P_t f(0) &= \int_G \diffat{s}{0}  f((s e_i, 0)
    \groupop k) p_t(k)\,d\haar(k) \\
    &= \int_G \hat{X}_i f(k)
    p_t(k)\,d\haar(k) \\
    &= P_t \hat{X}_i f(0).
  \end{align*}
  This completes the proof.
\end{proof}

Thus Theorem
\ref{main-grad-theorem} reduces to showing
\begin{equation}\label{grad-ineq-hat-semigroup}
  \abs{(P_1 \hat{\grad} f)(0)} \le K P_1(\abs{\grad f})(0)
\end{equation}
or in other words
\begin{equation}
  \abs{\int_G (\hat{\grad} f) p_1\,d\haar} \le K \int_G \abs{\grad f} p_1\,d\haar
\end{equation}
for which it suffices to show
\begin{equation}\label{integral-to-show}
 \abs{\int_G ((\grad-\hat{\grad}) f) p_1\,d\haar} \le K \int_G \abs{\grad f} p_1\,d\haar.
\end{equation}

A similar argument can be used to verify the following integration by
parts formula.
\begin{proposition}
  If $f \in \funcclass$, then
  \begin{equation}\label{byparts}
    \begin{split}
      \int_G (\grad f) p_1\,d\haar &= - \int_G (\grad p_1) f\,d\haar \\
      \int_G (\hat{\grad} f) p_1\,d\haar &= - \int_G (\hat{\grad} p_1) f\,d\haar
    \end{split}
  \end{equation}
\end{proposition}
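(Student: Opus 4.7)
The plan is to prove the identity first for compactly supported smooth functions by reducing it to ordinary Euclidean integration by parts, then extend to $f \in \funcclass$ by a cutoff and dominated convergence argument mirroring the one used in Proposition \ref{commute}.

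For the compactly supported case, observe from (\ref{Xi-formula}) that
\begin{equation*}
  X_i = \firstder{x^i} + \frac{1}{2}\sum_{j=1}^m \inner{J_{u_j}x}{e_i}\firstder{z^j}
\end{equation*}
has vanishing Euclidean divergence: the coefficient of $\partial_{x^i}$ is constant, and the coefficient of $\partial_{z^j}$ is a function of $x$ alone, so it is annihilated by $\partial_{z^j}$. Thus $X_i$ is a divergence-free first-order operator on $\R^{2n+m}$ with respect to Lebesgue measure $\haar$, and for $\varphi, \psi \in C_c^\infty(G)$ the usual divergence theorem gives $\int_G X_i(\varphi\psi)\,d\haar = 0$, hence $\int_G (X_i\varphi)\psi\,d\haar = -\int_G \varphi (X_i \psi)\,d\haar$. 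The same holds for $\hat{X}_i$ verbatim.

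To pass to general $f \in \funcclass$, fix a smooth function $\chi_R$ on $G$ with $\chi_R \equiv 1$ on $\{d(0,\cdot) \le R\}$, $\chi_R \equiv 0$ outside $\{d(0,\cdot) \le 2R\}$, and $\abs{\grad \chi_R}$ bounded uniformly in $R$ (obtained by composing a standard Euclidean cutoff with the Carnot--Carath\'eodory distance, using $d(0,(x,z)) \asymp \abs{x} + \abs{z}^{1/2}$). Apply the compactly supported identity to $f\chi_R$ and $p_1$:
\begin{equation*}
  \int_G \grad(f\chi_R)\,p_1\,d\haar = -\int_G f\chi_R\,\grad p_1\,d\haar.
\end{equation*}
Expand $\grad(f\chi_R) = (\grad f)\chi_R + f\,\grad\chi_R$ and send $R \to \infty$. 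The two ``main'' terms converge to $\int_G (\grad f)p_1\,d\haar$ and $-\int_G f\,\grad p_1\,d\haar$ respectively by dominated convergence: the integrand $(\grad f)p_1$ is dominated by $Me^{ad(0,g)^{2-\epsilon}}p_1(g)$, which is integrable by (\ref{p1-estimate}) since the Gaussian factor $e^{-d(0,g)^2/4}$ dominates $e^{ad(0,g)^{2-\epsilon}}$ for $2-\epsilon < 2$, and the integrand $f\,\grad p_1$ is dominated similarly using (\ref{gradp1-estimate}), which only adds a polynomial factor $(1+d(0,g))$.

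The remaining correction term is $\int_G f(\grad\chi_R)p_1\,d\haar$, supported on the annulus $\{R \le d(0,g) \le 2R\}$. On this annulus $\abs{f}\abs{\grad\chi_R} \lesssim e^{a(2R)^{2-\epsilon}}$, while $p_1 \lesssim e^{-R^2/4}$ up to polynomial factors, so the correction tends to $0$ as $R \to \infty$. This completes the proof for $\grad$; the proof for $\hat{\grad}$ is identical. The main obstacle is simply bookkeeping the growth/decay rates to confirm dominated convergence applies, and this is guaranteed by the definition of $\funcclass$ (which forces sub-Gaussian growth $2 - \epsilon < 2$) together with the Gaussian decay built into (\ref{p1-estimate}) and (\ref{gradp1-estimate}).
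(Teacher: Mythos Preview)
Your argument is correct, but it follows a different route from the paper's. The paper does not introduce a cutoff at all: instead it writes
\[
  \int_G X_i(fp_1)\,d\haar = \int_G \diffat{s}{0} (fp_1)(g \groupop (s e_i,0))\,d\haar(g),
\]
justifies interchanging $\diffat{s}{0}$ with $\int_G$ by the same growth/decay bookkeeping you used (definition of $\funcclass$, (\ref{p1-estimate}), (\ref{gradp1-estimate})), and then observes that the resulting $s$-dependent integral is constant by right invariance of Haar measure, so its derivative vanishes. The $\hat{\grad}$ case uses left invariance and (\ref{gradhatp1-estimate}).

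The trade-off: your approach makes the vanishing of $\int X_i(\cdot)$ completely elementary (zero Euclidean divergence plus compact support), at the cost of having to build $\chi_R$ and check three limits. The paper's approach avoids the cutoff entirely by exploiting the group structure, but the ``$\int X_i(\cdot)=0$'' step is hidden inside the invariance of Haar measure. Two minor presentational points on your version: you state the compactly supported identity for $\varphi,\psi \in C_c^\infty$ but then apply it with $f\chi_R \in C^1_c$ and $p_1 \in C^\infty$ (neither in $C_c^\infty$); this is harmless since all you need is that the product is $C^1_c$, but say so. Also, for the cutoff you should use the dilation $\chi_R = \chi_1 \circ \varphi_{1/R}$ rather than composing with $d(0,\cdot)$, which is not smooth; this gives $\abs{\grad \chi_R} \le C/R$, which is more than enough.
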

\begin{proof}
  Tentatively, we have
  \begin{align*}
    \int_G ((X_i f) p_1 + f X_i p_1)\,d\haar &= \int_G X_i(f
    p_1)\,d\haar \\
    &= \int_G \diffat{s}{0} (f p_1)(g \groupop (s e_i, 0))\,d\haar(g)
    \\
    &\overset{?}{=} \diffat{s}{0} \int_G  (f p_1)(g \groupop (s e_i, 0))\,d\haar(g)
    \\
    &= \diffat{s}{0} \int_G  (f p_1)(g)\,d\haar(g) = 0    
  \end{align*}
  by right invariance of Haar measure $\haar$.  It remains to justify
  the differentiation under the integral sign in the third line.  We
  note that
  \begin{align*}
    \int_G \sup_{s \in [-1,1]} \abs{\frac{d}{ds} (f p_1)(g \groupop (s
      e_i, 0))}\,d\haar(g) &= \int_G \sup_{s \in [-1,1]} \abs{X_i (f p_1)(g \groupop (s
      e_i, 0))}\,d\haar(g) \\
    &\le \int_G \sup_{s \in [-1,1]} \abs{((X_i f) p_1)(g \groupop (s
      e_i, 0))}\,d\haar(g) \\ &\quad + \int_G \sup_{s \in [-1,1]}
    \abs{(f X_i p_1)(g \groupop (s
      e_i, 0))}\,d\haar(g).
  \end{align*}
  The first integral is easily seen to be finite by the definition of
  $\mathcal{C}$ and the heat kernel estimate (\ref{p1-estimate}), by
  similar logic to that in the proof of Proposition \ref{commute}.
  The second integral is similar; we may bound $\abs{\grad p_1}$ using
  the estimates (\ref{gradp1-estimate}) and (\ref{p1-estimate}).

  To show the second identity, involving $\hat{\grad}$, the same
  argument applies, using instead the left invariance of Haar
  measure.  We can bound $\abs{\hat{\grad} p_1}$ using
  (\ref{gradhatp1-estimate}) and (\ref{p1-estimate}).
\end{proof}

We now introduce an alternate coordinate system on $G$, similar but
not exactly analogous to the so-called ``polar coordinate'' system
used in \cite{bbbc-jfa}.  As shown in
\cite{eldredge-precise-estimates}, there is a unique (up to
reparametrization) shortest horizontal path or \emph{geodesic} from
the identity $0$ to each point $(x,z) \in G$ with $x,z$ nonzero; it
has as its projection onto $\R^{2n} \times 0$ an arc of a circle lying
in the plane spanned by $x$ and $J_z x$, with the origin as one
endpoint, and $x$ as the other.  The region in this plane bounded by
the arc and the straight line from $0$ to $x$ has area equal to
$\abs{z}$.  The projection of the geodesic onto $0 \times \R^m$ is a
straight line from $0$ to $z$.

Our new coordinate system will identify a point $(x,z)$ with the point
$u \in \R^{2n}$ which is the center of the arc, and a vector $\eta \in
\R^m$ which is parallel to $z$ and whose magnitude equals the angle
subtended by the arc.  The change of coordinates $(u, \eta) \mapsto
(x,z)$ will be denoted by
\begin{align}
  \Phi &: \{(u,\eta) \in \R^{2n+m} : 0 < \abs{\eta} < 2\pi\}  \to
  \{(x,z) \in G : x \ne 0, z \ne 0\} \label{Phi-domain}
\intertext{where}
  \Phi(u,\eta) &:= \left( \left(I - e^{J_{\eta}}\right) u,
  \frac{\abs{u}^2}{2}
  \left(1-\frac{\sin\abs{\eta}}{\abs{\eta}}\right)\eta\right) \\
  &= \left( (1-\cos\abs{\eta})u + \frac{\sin\abs{\eta}}{\abs{\eta}}
  J_\eta u,     \frac{\abs{u}^2}{2}
  \left(1-\frac{\sin\abs{\eta}}{\abs{\eta}}\right)\eta\right)
\end{align}
by Proposition \ref{Jz-prop}, items \ref{Jz-skew} and
\ref{Jz-square}.  

To visualize this, let us consider the special case of the
Heisenberg group $\heis$, with $n=m=1$.  It is convenient to
identify the subspace $\R^{2n} \times 0$ with $\C$; in this case,
$J_\eta = i \eta$.  (Note $\eta \in \R$.)  We then have
\begin{equation}
  \Phi(u, \eta) = \left( (1-e^{i\eta})u, \frac{\abs{u}^2}{2}(\eta -
  \sin \eta)\right). 
\end{equation}
See Figure \ref{phi-fig} for an illustration of the relationship
between $(u, \eta)$ and $\Phi(u,\eta)$ in $\heis$.

  \begin{figure}
    \centering
    \includegraphics{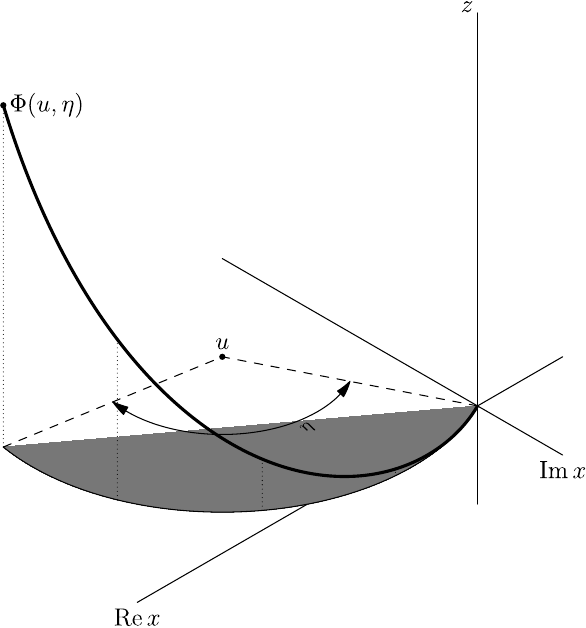}
    \caption{Illustration of the change of coordinates $\Phi$ in the
      classical Heisenberg group $\heis$.  The bold line is a
      geodesic, whose projection into the $x$-plane is
      an arc of a circle with center $u$ and subtending an angle
      $\eta$.  The $z$ coordinate of $\Phi(u,\eta)$ is equal to
      the area of the shaded circular segment.
    }\label{phi-fig}
  \end{figure}

Note that the $(u, \eta)$ coordinate system omits the set $\{z = 0\}
= \R^{2n} \times 0 \subset G$, for which the arc degenerates into a
straight line and has ``infinite radius,'' as well as the set $\{x =
0\} = 0 \times \R^{m}$, for which the arc becomes a circle whose
center $u$ is no longer uniquely determined.  These sets are of Haar
measure zero and hence will be neglected in the argument without
further comment.  Estimates which are shown to hold off these sets
will also hold on them, by continuity.

$\Phi$ has the property that for each $(u, \eta)$, the path $s \mapsto
\Phi(u,s\eta)$ traces the shortest horizontal path between any two of
its points, and has constant speed $\abs{u}\abs{\eta}$.  In
particular,
  \begin{equation}\label{dPhi}
    d(0, \Phi(u,\eta)) = \abs{u}\abs{\eta}.
    \end{equation}
Also, for any $f \in C^1(G)$,
\begin{equation}\label{geodesic-grad}
  \abs{\frac{d}{ds} f(\Phi(u, s\eta))} \le \abs{u} \abs{\eta}
  \abs{\grad f(\Phi(u, s\eta))}.
\end{equation}

  Note that if $(x,z) = \Phi(u,\eta)$, we have 
  \begin{align*}
    \abs{x}^2 &= \abs{u}^2 (2-2\cos\abs{\eta}) \\ 
    \abs{z} &= \frac{\abs{u}^2}{2} (\abs{\eta}-\sin\abs{\eta}).
\end{align*} 

To compare this with the ``polar coordinates'' $(u,s)$ used in
\cite{bbbc-jfa}, take $u=u$ and $s = \abs{u} \eta$.

Let $B := \{ g \in G : d(0, g) < 1\}$ denote the unit ball of the
Carnot-Carath\'eodory distance.  To express $B$ in $(u,\eta)$
coordinates, we note by (\ref{dPhi}) that $\Phi(u,\eta) \in B$ iff
$\abs{u}\abs{\eta} \le 1$; combining this with the constraints on $u$
and $\eta$ given in (\ref{Phi-domain}), we have
\begin{align}
  B &= \left\{ \Phi(u,\eta) : u \in \R^{2n}, \abs{\eta} < 2\pi \wedge
  \frac{1}{\abs{u}}\right\} \label{BPhi}
\intertext{and conversely}
B^C &= \left\{ \Phi(u, \eta) : \abs{u} \ge \frac{1}{2\pi},
\frac{1}{\abs{u}} \le \abs{\eta} < 2\pi \right\} \label{BCPhi}
\end{align}
modulo the null sets $\{x=0\}$ and $\{z=0\}$, as usual.

In $(u, \eta)$ coordinates, the heat kernel estimate
(\ref{p1-estimate}) reads
  \begin{align}
    p_1(\Phi(u,\eta)) &\asymp
    \frac{1+(\abs{u}\abs{\eta})^{2n-m-1}}{1+(\abs{u}^2 \abs{\eta}
      \sqrt{2-2\cos\abs{\eta}})^{n-\frac{1}{2}}}
    e^{-\frac{1}{4}(\abs{u}\abs{\eta})^2} \label{p1-u-eta-estimate-cos} \\
    &\asymp
    \frac{1+(\abs{u}\abs{\eta})^{2n-m-1}}{1+\left(\abs{u}^2 \abs{\eta}^2
      (2\pi-\abs{\eta}) \right)^{n-\frac{1}{2}}}
    e^{-\frac{1}{4}(\abs{u}\abs{\eta})^2} \label{p1-u-eta-estimate}
  \end{align}
  since $1-\cos \theta \asymp \theta^2(2\pi-\theta)^2$ for $\theta \in [0,2\pi]$.
We will often abuse notation and write $p_1(u,\eta)$ for
$p_1(\Phi(u,\eta))$, when no confusion will result.

\section{Proof of the gradient estimate}\label{sec-gradient-proof}

We now begin the proof of Theorem \ref{main-grad-theorem}, which
occupies the rest of this article.

We begin by computing the Jacobian determinant of the change of
coordinates $\Phi$, so that we can use $(u,\eta)$ coordinates in
explicit computations.

\begin{lemma}
  Let $A(u,\eta)$ denote the Jacobian determinant of $\Phi$, so
  that $d\haar = A(u, \eta)\,du\,d\eta$.  Then
  \begin{equation}
    A(u,\eta) = \abs{u}^{2m} \left(\frac{1}{2}-\frac{\sin\abs{\eta}}{2\abs{\eta}}\right)^{m-1}
      (2-2\cos\abs{\eta})^{n-1} \left(2 - 2\cos\abs{\eta} - \abs{\eta}\sin\abs{\eta}\right).
  \end{equation}
\end{lemma}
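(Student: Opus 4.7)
The plan is to compute the Jacobian determinant of $\Phi$ directly, using a Schur complement decomposition of the $(2n+m)\times(2n+m)$ Jacobian matrix.

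Writing the Jacobian in block form with blocks $\partial x/\partial u$, $\partial x/\partial \eta$, $\partial z/\partial u$, and $\partial z/\partial \eta$, I would first compute $A := \partial x/\partial u = (1-\cos\abs{\eta})I + \beta J_\eta$, where $\beta := \sin\abs{\eta}/\abs{\eta}$. Treating $J_\eta$ as a $2n \times 2n$ matrix and using $J_\eta^2 = -\abs{\eta}^2 I$ from Proposition~\ref{Jz-prop}, the product $[(1-\cos r)I + \beta J_\eta][(1-\cos r)I - \beta J_\eta]$ simplifies to $(2-2\cos r)I$ (writing $r := \abs{\eta}$), which yields both $\det A = (2-2\cos r)^n$ and an explicit formula for $A^{-1}$. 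Direct differentiation, together with the fact that $J$ is linear in its subscript (so $\partial J_\eta/\partial \eta_j = J_{u_j}$) and the elementary identity $r\beta'(r) + \beta = \cos r$, then gives tractable expressions for the remaining three blocks. In particular, $\partial z/\partial u = (1-\beta)\eta u^T$ is rank one, and $\partial z/\partial \eta$ is a rank-one perturbation of a scalar multiple of the identity.

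The Schur complement $S := \partial z/\partial \eta - (\partial z/\partial u) A^{-1} (\partial x/\partial \eta)$ is an $m \times m$ matrix. To evaluate it I would first compute the three scalars $u^T A^{-1} u = \abs{u}^2/2$, $u^T A^{-1} J_\eta u = r\sin r\,\abs{u}^2/(2-2\cos r)$, and $u^T A^{-1} J_{u_j} u = \beta \eta_j \abs{u}^2/(2-2\cos r)$. The last of these uses $u^T J_\eta J_{u_j} u = -\inner{J_\eta u}{J_{u_j} u} = -\eta_j \abs{u}^2$, which is the polarized H-type identity from Proposition~\ref{Jz-prop}. Assembling the pieces, the rank-one product $(\partial z/\partial u) A^{-1} (\partial x/\partial \eta)$ collapses to $\frac{\beta(1-\beta)\abs{u}^2}{2-2\cos r}\eta\eta^T$, so $S$ has the form $\alpha I + \gamma \eta\eta^T$ for explicit scalars $\alpha,\gamma$, and its determinant is $\alpha^{m-1}(\alpha + \gamma\abs{\eta}^2)$.

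The main obstacle is the trigonometric bookkeeping in simplifying the scalar $\alpha + \gamma\abs{\eta}^2$: none of the intermediate rational expressions in $\beta$, $\beta'$, $\sin r$, and $\cos r$ look at a glance as though they should collapse. However, repeated application of the two key identities $r\beta'(r) = \cos r - \beta$ and $(1-\cos r)^2 + \sin^2 r = 2-2\cos r$ reduces the scalar all the way to $\abs{u}^2(2-2\cos r - r\sin r)/(2-2\cos r)$. Combined with $\alpha^{m-1} = \abs{u}^{2(m-1)}\bigl((1-\beta)/2\bigr)^{m-1}$ and multiplied by $\det A = (2-2\cos r)^n$, this gives the claimed formula, with the $(2-2\cos r)^{n-1}$ factor produced by the cancellation of one factor of $(2-2\cos r)$ in the denominator of the Schur complement scalar.
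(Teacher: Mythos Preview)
Your proposal is correct but follows a genuinely different route from the paper. The paper first chooses, at each fixed $(u,\eta)$, an orthonormal basis adapted to the pair---namely $\hat u$, $\hat v \propto J_\eta u$, complementary pairs $\hat w_i$, $\hat y_i \propto J_\eta \hat w_i$ spanning the rest of $\R^{2n}$, $\hat\eta$, and $\hat\zeta_k \perp \hat\eta$ spanning $\R^m$---in which the Jacobian becomes almost block-diagonal, and then expands the determinant by cofactors along the $\hat\eta$ row. You instead keep the standard coordinates, factor the determinant as $\det(\partial x/\partial u)\cdot\det S$ via the Schur complement, and use the H-type identities $J_\eta^2=-\abs{\eta}^2 I$ and $\inner{J_z x}{J_w x}=\inner{z}{w}\abs{x}^2$ purely algebraically to evaluate $A^{-1}$ and the three scalars $u^T A^{-1}u$, $u^T A^{-1}J_\eta u$, $u^T A^{-1}J_{u_j}u$, which collapses $S$ to a rank-one perturbation of a scalar matrix. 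The paper's adapted-basis route makes the block structure visible from the start and keeps the arithmetic short; your Schur-complement route avoids constructing the special basis and is more basis-free in spirit, at the cost of the heavier trigonometric bookkeeping you flag in the simplification of $\alpha+\gamma\abs{\eta}^2$.
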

Note that $A(u,\eta)$ depends on $u, \eta$ only through their
absolute values $\abs{u}, \abs{\eta}$.  By an abuse of notation we may
occasionally use $A$ with $u$ or $\eta$ replaced by scalars, so that
$A(r, \rho)$ means $A(r \hat{u}, \rho \hat{\eta})$ for arbitrary unit
vectors $\hat{u}, \hat{\eta}$. 

For the Heisenberg group $\heis$ with $n=m=1$, this reduces to
\begin{equation*}
      A(u,\eta) = \abs{u}^{2} \left(2 - 2\cos\abs{\eta} - \abs{\eta}\sin\abs{\eta}\right).
\end{equation*}
The analogous expression appearing in \cite{bbbc-jfa} is slightly
incorrect.  However, it does have the same asymptotics as the correct
expression (see Corollary \ref{A-estimates}), which is sufficient for
the rest of the argument in \cite{bbbc-jfa}, so that its overall
correctness is not affected.

\begin{proof}
  Fix $u, \eta$.  Form an orthonormal basis for
  $T_{(u,\eta)}\Phi^{-1}(G) \cong \R^{2n+m}$ as follows.  Let
  $\hat{u}$ be a unit vector in the direction of $(u,0)$, $\hat{v}$ a
  unit vector in the direction of $(J_\eta u,0)$.  For $i=1, \dots,
  n-1$ let $\hat{w}_i, \hat{y}_i \in \R^{2n} \times 0$ be unit vectors
  such that $\hat{w}_i$ is orthogonal to $\hat{u}, \hat{v}, \hat{w}_j,
  \hat{y}_j, 1 \le j < i$, and let $\hat{y}_i$ be in the direction of
  $J_\eta \hat{w}_i$ so that $\hat{y}_i$ is orthogonal to $\hat{u},
  \hat{v}, \hat{w}_j, \hat{y}_j, 1 \le j < i$ as well as to
  $\hat{w}_i$.  (To see this, note that if $\inner{x}{y}=0$ and
  $\inner{x}{J_z y}=0$, then $\inner{J_z x}{y} = 0$ and $\inner{J_z
    x}{J_z y} = -\abs{z}^2\inner{x}{y}=0$.)  Let
  $\hat{\eta}$ be a unit vector in the direction of $(0,\eta)$, and
  let $\hat{\zeta}_k, k=1,\dots,m-1$ be orthonormal vectors in $0
  \times \R^{m}$ which are orthogonal to $\hat{\eta}$.  Then
  $\{\hat{u}, \hat{v}, \hat{w}_i, \hat{y}_i, \hat{\eta},
  \hat{\zeta}_k\}$ form an orthonormal basis for $\R^{2n+m}$.  Note
  $J_\eta \hat{u} = \abs{\eta} \hat{v}$, $J_\eta \hat{v} = -\abs{\eta}
  \hat{u}$,$J_\eta \hat{w}_i = \abs{\eta} \hat{y}_i$, $J_\eta
  \hat{y}_i = -\abs{\eta} \hat{w}_i$.  Then
  \begin{align*}
    \partial_{\hat{u}} \Phi(u,\eta) &= (1-\cos\abs{\eta})\hat{u}
    + \sin\abs{\eta} \hat{v} + \abs{u}
    \left(\abs{\eta}-\sin\abs{\eta}\right)\hat{\eta}\\
    \partial_{\hat{v}} \Phi(u,\eta) &= (1-\cos\abs{\eta})\hat{v}
    - \sin\abs{\eta} \hat{u} \\
    \partial_{\hat{w}_i} \Phi(u,\eta) &= (1-\cos\abs{\eta})\hat{w}_i
    + \sin\abs{\eta} \hat{y}_i \\
    \partial_{\hat{y}_i} \Phi(u,\eta) &= (1-\cos\abs{\eta})\hat{y}_i
    - \sin\abs{\eta} \hat{w}_i \\
    \partial_{\hat{\eta}} \Phi(u,\eta) &= \abs{u}(\sin\abs{\eta}) \hat{u} +
    \abs{u}(\cos\abs{\eta}) \hat{v} + \frac{\abs{u}^2}{2}
    \left(1-\cos\abs{\eta}\right)\hat{\eta} \\
    \partial_{\hat{\zeta}_k} \Phi(u,\eta) &=
    \frac{\sin{\abs{\eta}}}{\abs{\eta}} J_{\hat{\zeta}_k} u + \frac{\abs{u}^2}{2}
    \left(1-\frac{\sin\abs{\eta}}{\abs{\eta}}\right)\hat{\zeta}_k.
  \end{align*}
  In this basis, the Jacobian matrix has the form
  \begin{align}
    J &= 
    \begin{pmatrix}
      1 - \cos\abs{\eta} & -\sin\abs{\eta} & 0 & \abs{u}\sin\abs{\eta}
      & 0 \\
      \sin\abs{\eta} & 1-\cos\abs{\eta} & 0 & \abs{u}\cos\abs{\eta} &
      0 \\
      0 & 0 & B & 0 & * \\
      \abs{u}(\abs{\eta}-\sin\abs{\eta}) & 0 & 0 &
      \frac{\abs{u}^2}{2}(1-\cos\abs{\eta}) & 0 \\
      0 & 0 & 0 & 0 & D
    \end{pmatrix}_{(2n+m) \times (2n+m)}
    \intertext{where}
    B &:= \begin{pmatrix}
            1 - \cos\abs{\eta} & -\sin\abs{\eta} &  & & \\
            \sin\abs{\eta} & 1-\cos\abs{\eta} & & & \\
            & & \ddots & & \\
            & & & 1 - \cos\abs{\eta} & -\sin\abs{\eta} \\
            & & & \sin\abs{\eta} & 1-\cos\abs{\eta}
    \end{pmatrix}_{2(n-1) \times 2(n-1)}
    \intertext{ is a block-diagonal matrix of $2\times 2$ blocks, and}
    D &:=     \begin{pmatrix}
      \frac{\abs{u}^2}{2}
      \left(1-\frac{\sin\abs{\eta}}{\abs{\eta}}\right) & & \\
      & \ddots & \\
      & & \frac{\abs{u}^2}{2}
      \left(1-\frac{\sin\abs{\eta}}{\abs{\eta}}\right)
    \end{pmatrix}_{(m-1)\times (m-1)}
  \end{align}
  is diagonal.  Note $\abs{B} = (2-2\cos\abs{\eta})^{n-1}$ and $\abs{D} = \left(\frac{\abs{u}^2}{2}
      \left(1-\frac{\sin\abs{\eta}}{\abs{\eta}}\right)\right)^{m-1}$.

  So factoring out $\abs{D}$ and expanding about the $\hat{\eta}$ row,
  we have
  \begin{align*}
    \abs{J} &= \abs{D} \left(\abs{u}(\abs{\eta}-\sin\abs{\eta}) 
    \begin{vmatrix}
      -\sin\abs{\eta} & 0 & \abs{u}\sin\abs{\eta} \\
      1-\cos\abs{\eta} & 0 & \abs{u}\cos\abs{\eta} \\
      0 & B & 0       
    \end{vmatrix}
    + \frac{\abs{u}^2}{2}(1-\cos\abs{\eta}) 
    \begin{vmatrix}
      1 - \cos\abs{\eta} & -\sin\abs{\eta} & 0 \\
      \sin\abs{\eta} & 1-\cos\abs{\eta} & 0  \\
      0 & 0 & B 
    \end{vmatrix}
    \right) \\
    &= \left(\frac{\abs{u}^2}{2}
      \left(1-\frac{\sin\abs{\eta}}{\abs{\eta}}\right)\right)^{m-1} 
      \\ &\quad \times
      \left(\abs{u}(\abs{\eta}-\sin\abs{\eta})
      (-\abs{u}\sin\abs{\eta}) (2-2\cos\abs{\eta})^{n-1} + 
      \frac{\abs{u}^2}{2}(1-\cos\abs{\eta}) (2-2\cos\abs{\eta})^{n}
    \right)  \\
    &= \left(\frac{\abs{u}^2}{2}
      \left(1-\frac{\sin\abs{\eta}}{\abs{\eta}}\right)\right)^{m-1}
      \abs{u}^2 (2-2\cos\abs{\eta})^{n-1} \left((\abs{\eta}-\sin\abs{\eta})
      (-\sin\abs{\eta})  + 
      (1-\cos\abs{\eta})^2\right) \\
    &= \abs{u}^{2m} \left(\frac{1}{2}-\frac{\sin\abs{\eta}}{2\abs{\eta}}\right)^{m-1}
      (2-2\cos\abs{\eta})^{n-1} \left(2 - 2\cos\abs{\eta} - \abs{\eta}\sin\abs{\eta}\right)
  \end{align*}
\end{proof}


\begin{corollary}\label{A-estimates}
  \begin{equation}\label{A-estimates-eqn}
    A(u, \eta) \asymp \abs{u}^{2m} \abs{\eta}^{2(m+n)} (2\pi-\abs{\eta})^{2n-1}
  \end{equation}
\end{corollary}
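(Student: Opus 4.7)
The plan is to pin down the asymptotic behavior of each of the three nontrivial $\abs{\eta}$-dependent factors in the explicit formula for $A(u,\eta)$ just derived, write each as comparable to a monomial in $\abs{\eta}$ and $(2\pi-\abs{\eta})$ on the bounded interval $(0,2\pi)$, and then multiply. The prefactor $\abs{u}^{2m}$ comes along unchanged, and the remaining work is entirely one-variable asymptotic analysis on $(0,2\pi)$.

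For the first factor $\left(\tfrac{1}{2}-\tfrac{\sin\abs{\eta}}{2\abs{\eta}}\right)^{m-1}$, I would rewrite the base as $\tfrac{\abs{\eta}-\sin\abs{\eta}}{2\abs{\eta}}$; the Taylor expansion of $\sin$ gives $\abs{\eta} - \sin\abs{\eta} = \abs{\eta}^3/6 + O(\abs{\eta}^5)$, so this base is $\asymp \abs{\eta}^2$ near $0$ and is pinched between two positive constants near $2\pi$; since $\abs{\eta}$ is bounded, the base is $\asymp \abs{\eta}^2$ on all of $(0,2\pi)$, contributing $\abs{\eta}^{2(m-1)}$. For the second factor $(2-2\cos\abs{\eta})^{n-1}$, the identity $1-\cos\theta = 2\sin^2(\theta/2)$ together with $\sin(\theta/2) \asymp \theta(2\pi-\theta)$ on $[0,2\pi]$ (a simple-zero matching at both endpoints, valid since $\sin(\pi - \epsilon/2) = \sin(\epsilon/2)$) gives $2-2\cos\abs{\eta} \asymp \abs{\eta}^2(2\pi-\abs{\eta})^2$; this is exactly the estimate invoked by the paper just after (\ref{p1-u-eta-estimate}), contributing $\abs{\eta}^{2(n-1)}(2\pi-\abs{\eta})^{2(n-1)}$.

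The third factor $g(\abs{\eta}) := 2-2\cos\abs{\eta} - \abs{\eta}\sin\abs{\eta}$ is the main point. Successive differentiation at $0$ shows $g(0) = g'(0) = g''(0) = g'''(0) = 0$ and $g^{(4)}(0) = 2$, so $g(\abs{\eta}) = \abs{\eta}^4/12 + O(\abs{\eta}^5)$ near the origin. At the other end $g(2\pi) = 0$ while $g'(2\pi) = \sin(2\pi) - 2\pi\cos(2\pi) = -2\pi \ne 0$, so $g$ has a simple zero at $2\pi$. These two endpoint asymptotics suggest $g(\abs{\eta}) \asymp \abs{\eta}^4(2\pi-\abs{\eta})$, which I would upgrade to a genuine two-sided bound on $(0,2\pi)$ by first establishing positivity of $g$ on the interior. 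For this I would analyze $g'(\theta) = \sin\theta - \theta\cos\theta$, noting $g'(\theta) = \theta^3/3 + O(\theta^5) > 0$ near $0$, $g'(\pi) = \pi > 0$, and $g'(2\pi) = -2\pi < 0$, so $g$ rises from $0$, attains a unique interior maximum, and descends back to $0$ at $2\pi$, hence $g > 0$ on $(0,2\pi)$. This sign/derivative analysis of $g$ is the main obstacle; once in hand, continuity of $g(\abs{\eta})/(\abs{\eta}^4(2\pi-\abs{\eta}))$ on $(0,2\pi)$ together with matching vanishing orders at both endpoints yields the required $\asymp$.

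Assembling the three contributions and the prefactor $\abs{u}^{2m}$, the $\abs{\eta}$-exponents add to $2(m-1) + 2(n-1) + 4 = 2(m+n)$ and the $(2\pi-\abs{\eta})$-exponents add to $2(n-1) + 1 = 2n-1$, yielding (\ref{A-estimates-eqn}). The edge cases $m=1$ or $n=1$ are automatic, as the corresponding $\asymp 1$ factor enters with exponent zero.
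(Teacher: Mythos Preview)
Your approach is essentially the paper's: match endpoint Taylor orders factor by factor, then verify strict positivity of each factor on the interior so that the continuous ratio is bounded above and below. One step is loose, though. For the third factor $g(\theta)=2-2\cos\theta-\theta\sin\theta$, knowing only $g'(0^+)>0$, $g'(\pi)>0$, $g'(2\pi)<0$ does not by itself force a \emph{unique} interior maximum of $g$; three sample values of $g'$ are compatible with multiple sign changes. The quickest fix is to differentiate once more: $g''(\theta)=\theta\sin\theta$ is positive on $(0,\pi)$ and negative on $(\pi,2\pi)$, so $g'$ is unimodal with $g'(0)=0$, forcing $g'>0$ on $(0,\theta_0)$ and $g'<0$ on $(\theta_0,2\pi)$ for a unique $\theta_0\in(\pi,2\pi)$; now your monotonicity picture is justified and $g>0$ on $(0,2\pi)$ follows.

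For comparison, the paper handles this factor more directly via the half-angle substitution $\theta=2\phi$, which gives the factorization $g(2\phi)=4\sin\phi\,(\sin\phi-\phi\cos\phi)$; then $\sin\phi>0$ on $(0,\pi)$ and $h(\phi):=\sin\phi-\phi\cos\phi$ satisfies $h(0)=0$, $h'(\phi)=\phi\sin\phi>0$ on $(0,\pi)$, so $h>0$ there. This avoids the second-derivative step altogether.
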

\begin{proof}
  The asymptotic equivalence near $\abs{\eta}=0$ and $\abs{\eta}=2\pi$
  follows from a routine Taylor series computation.

  It then suffices to show that $A(u,\eta) > 0$ for all $0 <
  \abs{\eta} < 2\pi$.  We have $\frac{1}{2} -
  \frac{\sin\abs{\eta}}{2\abs{\eta}} > 0$ for all $\abs{\eta} > 0$,
  since $x > \sin x$ for all $x > 0$.  We also have $2-2\cos\abs{\eta}
  > 0$ for all $0 < \abs{\eta} < 2\pi$.

  Finally, to show $f(\abs{\eta}) := 2 - 2\cos\abs{\eta} -
  \abs{\eta}\sin\abs{\eta} > 0$, let $\theta = \frac{1}{2}\abs{\eta}$.
  Using double-angle identities, we have $f(2\theta) = 4 \sin\theta
  (\sin\theta - \theta\cos\theta)$.  For $0 < \theta < \pi$ we have
  $\sin\theta > 0$ so it suffices to show $g(\theta) :=
  \sin\theta-\theta\cos\theta > 0$.  But we have $g(0)=0$ and
  $g'(\theta) = \theta\sin\theta > 0$ for $0 < \theta < \pi$.
\end{proof}

The heat kernel estimates will be used to prove a technical lemma
regarding integrating the heat kernel along a geodesic.  The proof
requires the following simple fact from calculus, of which a close
relative appears in \cite{eldredge-precise-estimates}.

\begin{lemma}\label{gaussian-calculus}
  For any $q \in \R$, $a_0 > 0$ there exists a constant $C = C_{q,a_0}$ such that for any
  $a \ge a_0$ we have
  \begin{equation}
    \int_{t=1}^{t=\infty} t^q e^{-(at)^2}\,dt \le C \frac{1}{a^2} e^{-a^2}.
  \end{equation}
\end{lemma}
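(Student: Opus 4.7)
The plan is to substitute $s = at$ to normalize the exponent, which turns the integral into a standard Gaussian tail at the point $s = a$. Under this substitution, $dt = ds/a$ and $t^q = a^{-q} s^q$, so
\[
  \int_1^\infty t^q e^{-(at)^2}\,dt = a^{-q-1} \int_a^\infty s^q e^{-s^2}\,ds,
\]
and the lemma reduces to proving the tail bound $\int_a^\infty s^q e^{-s^2}\,ds \le C_{q,a_0} \, a^{q-1} e^{-a^2}$ uniformly for $a \ge a_0$.

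To prove this tail bound I would isolate the Gaussian decay by writing $e^{-s^2} = e^{-a^2} e^{-(s^2 - a^2)}$ and using the elementary estimate $s^2 - a^2 = (s-a)(s+a) \ge 2a(s-a)$ for $s \ge a$. After the change of variable $u = s - a$ this yields
\[
  \int_a^\infty s^q e^{-s^2}\,ds \le e^{-a^2} \int_0^\infty (u+a)^q e^{-2au}\,du.
\]
I would then split into two cases. For $q \ge 0$ the bound $(u+a)^q \le 2^q(u^q + a^q)$ reduces the remaining integral to $C \Gamma(q+1) a^{-(q+1)} + C a^{q-1}$, and for $a \ge a_0 > 0$ the $a^{q-1}$ term dominates the $a^{-(q+1)}$ term up to a constant depending only on $q$ and $a_0$. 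For $q < 0$ the trivial estimate $(u+a)^q \le a^q$ (since $u+a \ge a$) immediately gives $\int_0^\infty (u+a)^q e^{-2au}\,du \le a^{q-1}/2$. Either way, the total estimate is $C_{q,a_0}\, a^{q-1} e^{-a^2}$, and combining with the substitution above produces exactly $C_{q,a_0}\, a^{-2} e^{-a^2}$.

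The only subtlety is the uniformity over all $a \ge a_0$, not merely asymptotically as $a \to \infty$; this is why the hypothesis fixes a positive lower bound $a_0$. Any subdominant contribution with a power of $a$ bigger than $a^{q-1}$ would be bounded by $(a_0)^{\text{something}} a^{q-1}$ on the range $a \ge a_0$, so it can be absorbed into $C_{q,a_0}$. I do not anticipate any genuine obstacle beyond careful bookkeeping of constants in the two sign cases for $q$.
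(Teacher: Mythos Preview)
Your argument is correct. Both the substitution $s=at$ and the subsequent tail estimate via $s^2-a^2\ge 2a(s-a)$ are sound, and the case split on the sign of $q$ handles the polynomial factor cleanly; the uniformity in $a\ge a_0$ is used exactly where you indicate.

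The paper proceeds differently: it substitutes $s=t^2$, which converts the Gaussian $e^{-(at)^2}$ directly into an exponential $e^{-a^2 s}$ and reduces the problem to bounding $\int_1^\infty s^{q'}e^{-a^2 s}\,ds$ with $q'=(q-1)/2$. For $q'\le 0$ the bound $s^{q'}\le 1$ gives the result in one line; for $q'>0$ a single integration by parts lowers the exponent by one and produces a constant factor $q'/a^2\le q'/a_0^2$, so induction on $\lceil q'\rceil$ finishes. Your route replaces this recursion by the elementary inequality $(u+a)^q\le 2^q(u^q+a^q)$ (for $q\ge 0$) and a Gamma-function evaluation, which avoids induction at the cost of slightly looser constants; the paper's route avoids any splitting of $(u+a)^q$ but trades it for the inductive step. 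Both are equally short and equally elementary.
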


\begin{proof}
  Make the change of variables $s=t^2$ to get
  \begin{equation*}
    \int_{t=1}^{t=\infty} t^q e^{-(at)^2}\,dt = \frac{1}{2}
    \int_{s=1}^{s=\infty} s^{q'} e^{-a^2 s}\,ds
  \end{equation*}
  where $q' = \frac{q-1}{2}$.  For $q' \le 0$ (i.e. $q \le 1$), we
  have $s^{q'} \le 1$ and thus
\begin{equation*}
    \int_{s=1}^{s=\infty} s^{q'} e^{-a^2 s}\,ds \le  \int_{s=1}^{s=\infty} e^{-a^2 s}\,ds = \frac{1}{a^2}e^{-a^2}.
\end{equation*}
For $q' > 0$, notice that integration by parts gives
\begin{align*}
  \int_{s=1}^{s=\infty} s^{q'} e^{-a^2 s}\,ds &=
  \frac{1}{a^2}e^{-a^2} + \frac{q'}{a^2}
  \int_{s=1}^{s=\infty} s^{q'-1} e^{-a^2 s}\,ds \\ 
  &\le \frac{1}{a^2}e^{-a^2} + \frac{q'}{a_0^2}\int_{s=1}^{s=\infty} s^{q'-1} e^{-a^2 s}\,ds
\end{align*}
whereupon the result follows by induction.
\end{proof}



\begin{lemma}\label{geodesic-integral-bound}
  For each $q \in \R$ there exists a constant $C_q$ such that for
  all $u, \eta$ with $\Phi(u, \eta) \in B^C$, i.e. $\abs{u}\abs{\eta}
  \ge 1$,  we have
  \begin{align}
    \int_{t=1}^{t=\frac{2\pi}{\abs{\eta} }} p_1(u,t\eta) A(u,t\eta)
    t^{q}\,dt &\le  \frac{C_q}{(\abs{u} \abs{\eta})^2} p_1(u,\eta)
    A(u,\eta) \label{geodesic-integral-bound-stronger} \\
    &\le C_q  p_1(u,\eta) A(u,\eta). \label{geodesic-integral-bound-weaker}
  \end{align}
\end{lemma}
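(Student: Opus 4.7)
The plan is to bound the integrand pointwise by $p_1(u,\eta)A(u,\eta)$ times a polynomial in $t$ times a Gaussian in $t$, extend the integration to $[1,\infty)$, and apply Lemma \ref{gaussian-calculus}. Setting $a := \abs{u}\abs{\eta} \ge 1$ and $\beta := \abs{\eta}$, and using (\ref{p1-u-eta-estimate}) together with Corollary \ref{A-estimates} to form the ratio of the integrand to $p_1(u,\eta)A(u,\eta)$, I obtain
\begin{equation*}
  \frac{p_1(u,t\eta) A(u,t\eta)\, t^q}{p_1(u,\eta) A(u,\eta)} \asymp t^{2(m+n)+q}\cdot\frac{1+(at)^{2n-m-1}}{1+a^{2n-m-1}}\cdot \left(\frac{2\pi-t\beta}{2\pi-\beta}\right)^{2n-1}\cdot \frac{1+(a^2(2\pi-\beta))^{n-1/2}}{1+(a^2 t^2(2\pi-t\beta))^{n-1/2}}\cdot e^{-a^2(t^2-1)/4}.
\end{equation*}
The goal is to show that, aside from the Gaussian, the right side is bounded by $Ct^{Q_0+q}$ for some fixed $Q_0 = Q_0(n,m)$.

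The factor $(1+(at)^{2n-m-1})/(1+a^{2n-m-1})$ is easily bounded by $Ct^{(2n-m-1)_+}$ using $at\ge a \ge 1$. For the remaining two rational factors (from the $p_1$-denominator and the $A$-ratio), I set $\tau := (2\pi-t\beta)/(2\pi-\beta) \in (0,1]$ and $y := a^2(2\pi-\beta) \ge 0$, so that their product becomes
\begin{equation*}
 \tau^{2n-1}\cdot \frac{1+y^{n-1/2}}{1+(yt^2\tau)^{n-1/2}}.
\end{equation*}
I plan to show this is uniformly bounded by a constant via a short case split on whether $y$ and $yt^2\tau$ exceed $1$. The delicate subcase is $y > 1$ and $yt^2\tau \le 1$, which occurs only as $t$ approaches $2\pi/\beta$; there the constraint $\tau \le 1/(yt^2)$, combined with the numerical identity $2(n - 1/2) = 2n-1$, forces $\tau^{2n-1}y^{n-1/2} \le y^{-(n-1/2)}t^{-(4n-2)} \le 1$. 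The other subcases are similar or trivial.

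With the claim in hand, set $Q_0 := 2(m+n)+(2n-m-1)_+$. Then
\begin{equation*}
 \int_1^{2\pi/\abs{\eta}} p_1(u,t\eta) A(u,t\eta) t^q\,dt \le C\,p_1(u,\eta) A(u,\eta)\,e^{a^2/4}\int_1^\infty t^{Q_0+q}e^{-(at/2)^2}\,dt,
\end{equation*}
since $e^{-a^2(t^2-1)/4} = e^{a^2/4}e^{-(at/2)^2}$. Applying Lemma \ref{gaussian-calculus} with exponent $Q_0+q$ and $a_0 = 1/2$ (valid because $a/2 \ge 1/2$) gives a bound of the form $C_q a^{-2} e^{-a^2/4}$ on the $t$-integral, which yields (\ref{geodesic-integral-bound-stronger}); the weaker bound (\ref{geodesic-integral-bound-weaker}) is then immediate from $a \ge 1$.

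The main obstacle is the case analysis in the second paragraph: the potential near-singularity of $1/(1+(a^2 t^2(2\pi-t\beta))^{n-1/2})$ as $t \to 2\pi/\abs{\eta}$ must be absorbed by the Jacobian factor $(2\pi-t\beta)^{2n-1}$ coming from $A(u,t\eta)$. Once that cancellation is verified, the rest of the argument is a direct reduction to the one-variable Gaussian integral already handled by Lemma \ref{gaussian-calculus}.
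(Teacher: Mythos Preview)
Your argument is correct. The key pointwise claim---that
\[
\tau^{2n-1}\,\frac{1+y^{n-1/2}}{1+(yt^2\tau)^{n-1/2}}\le C
\]
uniformly in $y\ge 0$, $t\ge 1$, $\tau\in(0,1]$---checks out in all three subcases you describe, the only nontrivial one being $y>1$, $yt^2\tau\le 1$, where your use of $\tau\le (yt^2)^{-1}$ and the numerology $2(n-\tfrac12)=2n-1$ gives exactly the cancellation needed. Once that bound is in hand, the reduction to Lemma~\ref{gaussian-calculus} is immediate and yields \eqref{geodesic-integral-bound-stronger} as you say.

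Your route is genuinely different from the paper's and considerably shorter. The paper partitions $B^C$ into three explicit regions $R_1,R_2,R_3$ (according to whether $\abs{\eta}\le\pi$, $\pi<\abs{\eta}\le 2\pi-\abs{u}^{-2}$, or $\abs{\eta}>2\pi-\abs{u}^{-2}$), simplifies $p_1 A$ to a monomial-times-Gaussian $F_i$ in each, and then for each starting region integrates piecewise over the portions of the geodesic lying in subsequent regions, matching the estimates at the boundaries (the $R_1\to R_3$ transition in particular requires a separate delicate argument). Your approach avoids this case analysis on the \emph{starting} region entirely: by working directly with the ratio and introducing the variables $y=a^2(2\pi-\beta)$ and $\tau$, your three subcases on $(y,yt^2\tau)$ absorb all of the paper's region-matching into a single algebraic inequality. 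The paper's decomposition has the modest advantage of making the asymptotic behaviour of $p_1 A$ transparent in each regime, which may aid intuition; your approach is cleaner and makes the role of the Jacobian factor $(2\pi-t\abs{\eta})^{2n-1}$ in cancelling the heat-kernel denominator more explicit.
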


Note that (\ref{geodesic-integral-bound-weaker}) follows immediately
from the stronger statement (\ref{geodesic-integral-bound-stronger}),
since by assumption $\abs{u}\abs{\eta} \ge 1$.  In fact, we shall only
use (\ref{geodesic-integral-bound-weaker}) in the sequel.

\begin{proof}
  Assume throughout that $\abs{u}\abs{\eta} \ge 1$ and $0 < \abs{\eta}
  < 2\pi$.  (See (\ref{BCPhi}).)

  The proof involves the fact that a geodesic passes through (up to) three
  regions of $G$ in which the estimates for $p_1$ and $A$ simplify in
  different ways.  We define these regions, which partition
  $B^C$, as follows.  See Figure \ref{regions-fig}.
  \begin{figure}
    \centering
    \includegraphics{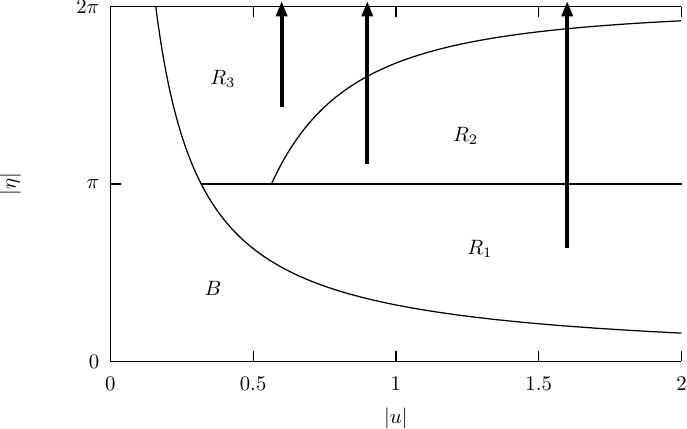}
    \caption{The regions $R_1, R_2, R_3$, seen in the
      $\abs{u}$-$\abs{\eta}$ plane.  The dark lines indicate examples of
      the geodesic paths of integration used in
      (\ref{geodesic-integral-bound-stronger}).  }\label{regions-fig}
  \end{figure}
  \begin{enumerate}
    \item Region $R_1$ is the set of $\Phi(u,\eta)$ such that
       $0 <
      \abs{\eta} \le \pi$.  (This corresponds to having $\abs{x}^2
      \lesssim \abs{z}$.) In this region we have $\abs{u} \ge
      \frac{1}{\pi}$ and $\pi \le 2\pi - \abs{\eta} < 2\pi$.
      Therefore (\ref{p1-u-eta-estimate}) becomes
      \begin{align*}
        p_1(u, \eta) &\asympon{R_1} (\abs{u}\abs{\eta})^{-m}
        e^{-\frac{1}{4}(\abs{u}\abs{\eta})^2}
        \intertext{and Corollary \ref{A-estimates} yields}
        A(u, \eta) &\asympon{R_1} \abs{u}^{2m} \abs{\eta}^{2(n+m)}
        \intertext{so that}
        p_1(u, \eta) A(u, \eta) &\asympon{R_1} \abs{u}^m
        \abs{\eta}^{2n+m} e^{-\frac{1}{4}(\abs{u}\abs{\eta})^2} =: F_1(u,\eta).
      \end{align*}

    \item Region $R_2$ is the set of $\Phi(u,\eta)$ such that
       $\pi <
      \abs{\eta} \le 2\pi-\frac{1}{\abs{u}^2}$.  (This corresponds to having
      $\abs{x}^2 \gtrsim \abs{z}$ and $\abs{x}^2 \abs{z} \gtrsim 1$.)
      In this region, we have $\abs{u}^2 \abs{\eta}^2
      (2\pi-\abs{\eta}) \ge \pi^2$, so that
      \begin{align*}
        p_1(u, \eta) &\asympon{R_2} \abs{u}^{-m}
        (2\pi-\abs{\eta})^{-n+\frac{1}{2}}
        e^{-\frac{1}{4}(\abs{u}\abs{\eta})^2} \\
        A(u, \eta) &\asympon{R_2} \abs{u}^{2m} (2\pi-\abs{\eta})^{2n-1}
        \\
        p_1(u, \eta) A(u, \eta) &\asympon{R_2} \abs{u}^m
        (2\pi-\abs{\eta})^{n-\frac{1}{2}}
        e^{-\frac{1}{4}(\abs{u}\abs{\eta})^2} =: F_2(u,\eta)
        \\ &\asympon{R_2}  \abs{u}^m \abs{\eta}^{2n+m}
        (2\pi-\abs{\eta})^{n-\frac{1}{2}}
        e^{-\frac{1}{4}(\abs{u}\abs{\eta})^2} =: \tilde{F}_2(u, \eta).
      \end{align*}
      We shall use the estimates $F_2, \tilde{F}_2$ at different
      times.  Although $F_2 \asympon{R_2} \tilde{F}_2$ (since
      $\abs{\eta} \asympon{R_2} 1$), they are not
      equivalent on $R_1$.

    \item Region $R_3$ is the set of $\Phi(u,\eta)$ such that
       $\abs{\eta} > \max\left(\pi,
      2\pi-\frac{1}{\abs{u}^2}\right)$.  (This corresponds to having
      $\abs{x}^2 \gtrsim \abs{z}$ and $\abs{x}^2 \abs{z} \lesssim 1$.)
      In this region, we have $\abs{u}^2 \abs{\eta}^2
      (2\pi-\abs{\eta}) < (2\pi)^2$, so that
      \begin{align*}
        p_1(u, \eta) &\asympon{R_3} \abs{u}^{2n-m-1}
        e^{-\frac{1}{4}(\abs{u}\abs{\eta})^2} \\
        A(u, \eta) &\asympon{R_3} \abs{u}^{2m} (2\pi-\abs{\eta})^{2n-1}
        \\
        p_1(u, \eta) A(u, \eta) &\asympon{R_3} \abs{u}^{2n+m-1}
        (2\pi-\abs{\eta})^{2n-1} e^{-\frac{1}{4}(\abs{u}\abs{\eta})^2}
        =: F_3(u,\eta)
      \end{align*}
  \end{enumerate}

  We observe that a geodesic starting from the origin (given by $t
  \mapsto \Phi(u, t\eta)$ for some fixed $(u,\eta)$) passes through
  these regions in order, except that it skips Region 2 if $\abs{u} <
  \pi^{-1/2}$.

  We now estimate the desired integral along a portion of a geodesic
  lying in a single region.
  \begin{claim}\label{single-region}
    Let $q \in \R$.  Suppose that $F : G \to \R$ is given by
    \begin{equation*}
      F(u,\eta) =  \abs{u}^{\alpha}
      \abs{\eta}^{\beta} (2\pi-\abs{\eta})^{\gamma}
      e^{-\frac{1}{4}(\abs{u}\abs{\eta})^2}
    \end{equation*}
    for some nonnegative powers $\alpha, \beta, \gamma$, and that
    there is some region $R \subset G$ such that $F \asympon{R} p_1
    A$.  Then there is a constant $C$ depending on
    $q$, $F$, $R$ such that for all $u, \eta, \tau_0, \tau_1, \tau_2$
    satisfying
    \begin{itemize}
    \item $\abs{u} \abs{\eta} \ge 1$;
    \item $1 \le \tau_0 \le \tau_1 \le \tau_2 \le
    \frac{2\pi}{\abs{\eta}}$; and
    \item $\Phi(u, t\eta) \in R$ for all $t \in
          [\tau_1,\tau_2]$
    \end{itemize}
    we have
    \begin{equation}
      \int_{t=\tau_1}^{t=\tau_2} p_1(u,t\eta) A(u, t\eta) t^q\,dt \le
      C \frac{\tau_0^{q-1}}{(\abs{u}\abs{\eta})^2} F(u,\tau_0 \eta).
    \end{equation}
  \end{claim}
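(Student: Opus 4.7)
The plan is to replace the integrand by the model $F$ using the asymptotic hypothesis on $R$, and then reduce the resulting integral to a Gaussian tail estimate handled by Lemma \ref{gaussian-calculus}. Since $\Phi(u,t\eta) \in R$ throughout $t \in [\tau_1,\tau_2]$, the assumption $F \asympon{R} p_1 A$ lets us replace $p_1(u,t\eta) A(u,t\eta)$ by $F(u,t\eta)$ inside the integral at the cost of a multiplicative constant. So it suffices to show
\begin{equation*}
  \int_{\tau_1}^{\tau_2} F(u,t\eta)\, t^q\,dt \le C\, \frac{\tau_0^{q-1}}{(\abs{u}\abs{\eta})^2}\, F(u,\tau_0\eta).
\end{equation*}

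The next step is the substitution $r = t/\tau_0$, which turns the left side into
\begin{equation*}
  \tau_0^{q+1} F(u,\tau_0\eta) \int_{\tau_1/\tau_0}^{\tau_2/\tau_0} \frac{F(u,r\tau_0\eta)}{F(u,\tau_0\eta)}\, r^q\,dr,
\end{equation*}
and the ratio inside unpacks to
\begin{equation*}
  \frac{F(u,r\tau_0\eta)}{F(u,\tau_0\eta)} = r^\beta \left(\frac{2\pi - r\tau_0\abs{\eta}}{2\pi - \tau_0\abs{\eta}}\right)^\gamma e^{-(\abs{u}\tau_0\abs{\eta})^2(r^2-1)/4}.
\end{equation*}
Because $1 \le \tau_0 \le \tau_1 \le \tau_2 \le 2\pi/\abs{\eta}$, the new range is contained in $[1,\infty)$ and we also have $r\tau_0\abs{\eta} \in [\tau_0\abs{\eta}, 2\pi]$, so $0 \le 2\pi - r\tau_0\abs{\eta} \le 2\pi - \tau_0\abs{\eta}$; since $\gamma \ge 0$, the middle factor is bounded by $1$. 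This is precisely where the hypotheses $\gamma \ge 0$ and $\tau_0 \le \tau_1$ earn their keep.

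What remains is to bound $\int_1^\infty r^{\beta+q} e^{-a^2(r^2-1)}\,dr$ where $a := \abs{u}\tau_0\abs{\eta}/2 \ge 1/2$, using the standing assumption $\abs{u}\abs{\eta} \ge 1$ together with $\tau_0 \ge 1$. Factoring out $e^{a^2}$ writes this as $e^{a^2} \int_1^\infty r^{\beta+q} e^{-(ar)^2}\,dr$, and Lemma \ref{gaussian-calculus} with $a_0 = 1/2$ bounds it by $C/a^2 = 4C/(\abs{u}\tau_0\abs{\eta})^2$. Combining with the prefactor gives $\tau_0^{q+1}/\tau_0^2 = \tau_0^{q-1}$, exactly matching the claim. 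The main obstacle is really only the bookkeeping: ensuring that the Gaussian dominates the polynomial factor $r^{\beta+q}$ of arbitrary real exponent, and that the $(2\pi - \cdot)^\gamma$ ratio behaves in the favorable direction. Both issues are dispatched cleanly by Lemma \ref{gaussian-calculus} and by $\gamma \ge 0$, respectively.
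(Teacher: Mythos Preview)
Your proof is correct and follows essentially the same approach as the paper: replace $p_1 A$ by $F$ via the asymptotic on $R$, use $\gamma \ge 0$ to bound the $(2\pi-\cdot)^\gamma$ factor by its value at the left endpoint, rescale $t = \tau_0 r$, and invoke Lemma~\ref{gaussian-calculus} with $a = \tfrac{1}{2}\tau_0\abs{u}\abs{\eta} \ge \tfrac{1}{2}$. The only cosmetic difference is that the paper extends the integration interval to $[\tau_0,\tau_2]$ before substituting, whereas you substitute first and then extend to $[1,\infty)$; the effect is identical.
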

  
\begin{proof}[Proof of Claim \ref{single-region}] We have
  \begin{align*}
    \int_{t=\tau_1}^{t=\tau_2} p_1(u,t\eta) A(u, t\eta) t^q\,dt &\le C
    \int_{t=\tau_1}^{t=\tau_2} F(u, t\eta) t^q\,dt \\
    &\le C \int_{t=\tau_0}^{t=\tau_2} F(u, t\eta) t^q\,dt \\
    &= C \abs{u}^{\alpha} \abs{\eta}^{\beta} \int_{t=\tau_0}^{t=\tau_2}
    t^{q+\beta} (2\pi-t\abs{\eta})^{\gamma}
    e^{-\frac{1}{4}(t\abs{u}\abs{\eta})^2}\,dt \\
    &\le C \abs{u}^{\alpha} \abs{\eta}^{\beta} (2\pi-\tau_0 \abs{\eta})^{\gamma}\int_{t=\tau_0}^{t=\tau_2}
    t^{q+\beta} 
    e^{-\frac{1}{4}(t\abs{u}\abs{\eta})^2}\,dt
\intertext{since $t \ge \tau_0$.  We now make the change of
  variables $t = t' \tau_0$:}
    &\le C \abs{u}^{\alpha} \abs{\eta}^{\beta}
(2\pi-\tau_0 \abs{\eta})^{\gamma} \tau_0^{q+\beta+1} \int_{t'=1}^{t'=\infty}
    t'^{q+\beta} 
    e^{-\frac{1}{4}(t' \tau_0 \abs{u}\abs{\eta})^2}\,dt' \\
    &\le C' \abs{u}^{\alpha} \abs{\eta}^{\beta}
    (2\pi-\tau_0 \abs{\eta})^{\gamma} \tau_0^{q+\beta+1} \frac{1}{(\tau_0
      \abs{u}\abs{\eta})^2} e^{-\frac{1}{4}(\tau_0 \abs{u}\abs{\eta})^2} \\
    &= C' \frac{\tau_0^{q-1}}{(\abs{u}\abs{\eta})^2} F(u, \tau_0 \eta)
  \end{align*}
  where in the second-to-last line we applied Lemma
  \ref{gaussian-calculus} with $a = \frac{1}{2} \tau_0
  \abs{u}\abs{\eta}$, \mbox{$a_0 = \frac{1}{2}$}.  (Note that
  $\abs{u}\abs{\eta} \ge 1$ and $\tau_0 \ge 1$ by assumption, so $a
  \ge a_0$.)  Claim \ref{single-region} is proved.
\end{proof}

  Now for fixed $u, \eta$, let
  \begin{align*}
    t_2 &:= \max\left(1, \frac{\pi}{\abs{\eta}}\right) \\
    t_3 &:= \max\left(t_2, \frac{1}{\abs{\eta}}\left(2\pi-\frac{1}{\abs{u}^2}\right)\right)
  \end{align*}
  so that
  \begin{align*}
    \Phi(u, t\eta) &\in R_1 \text{ for } 1 < t \le t_2 \\
    \Phi(u, t\eta) &\in R_2 \text{ for } t_2 < t < t_3 \\
    \Phi(u, t\eta) &\in R_3 \text{ for } t_3 \le t <
    \frac{2\pi}{\abs{\eta}}.
  \end{align*}

  We divide the remainder of the proof into cases, depending on the
  region where $\Phi(u,\eta)$ resides.

  \begin{case}
  Suppose that $\Phi(u,\eta) \in R_1$.  We have
  \begin{equation*}
    \int_{t=1}^{t=\frac{2\pi}{\abs{\eta}}} p_1(u,t\eta) A(u,t\eta)
    t^{q}\,dt = \int_{t=1}^{t=t_2} + \int_{t=t_2}^{t=t_3} +
    \int_{t=t_3}^{t=\frac{2\pi}{\abs{\eta}}}.
  \end{equation*}

  For the first integral, where $\Phi(u, t\eta) \in R_1$, we have by
  Claim \ref{single-region} (taking $\tau_0 = \tau_1 = 1$, $\tau_2 =
  t_2$, $R = R_1$, $F = F_1$) that
  \begin{align*}
    \int_{t=1}^{t=t_2} p_1(u,t\eta) A(u,t\eta) t^{q}\,dt \le 
    \frac{C}{(\abs{u}\abs{\eta})^2} F_1(u,\eta) \le \frac{C'}{(\abs{u}\abs{\eta})^2} p_1(u,\eta) A(u,\eta)
  \end{align*}
  since $F_1 \asympon{R_1} p_1 A$.  

  For the second integral, where $\Phi(u, t\eta) \in R_2$, we take
  $\tau_0 = 1$, $\tau_1 = t_2$, $\tau_2 = t_3$, $R = R_2$, $F =
  \tilde{F}_2$ in Claim \ref{single-region} to obtain
  \begin{align*}
    \int_{t=t_2}^{t=t_3} p_1(u,t\eta) A(u,t\eta) t^{q}\,dt 
    &\le  \frac{C}{(\abs{u}\abs{\eta})^2} \tilde{F}_2(u,\eta).
  \end{align*}
  However, for $\Phi(u, \eta) \in R_1$ we have
  \begin{align*}
    \frac{\tilde{F}_2(u, \eta)}{F_1(u,\eta)} =
    (2\pi-\abs{\eta})^{n-\frac{1}{2}} \le (2\pi)^{n-\frac{1}{2}}.
  \end{align*}
  Thus
  \begin{align*}
    \int_{t=t_2}^{t=t_3} p_1(u,t\eta) A(u,t\eta) t^{q}\,dt &\le
    \frac{C'}{(\abs{u}\abs{\eta})^2} {F}_1(u,\eta) \\
    &\le \frac{C''}{(\abs{u}\abs{\eta})^2} p_1(u, \eta) A(u, \eta)
  \end{align*}

  The third integral is more subtle.  We apply Claim
  \ref{single-region} with $\tau_0 = \tau_1 = t_3$, $\tau_3 =
  \frac{2\pi}{\abs{\eta}}$, $R = R_3$, $F = {F}_3$:
  \begin{align*}
    \int_{t=t_3}^{t=\frac{2\pi}{\abs{\eta}}} p_1(u,t\eta) A(u,t\eta)
    t^{q}\,dt &\le C \frac{t_3^{q-1}}{(\abs{u}\abs{\eta})^2} {F}_3(u,
    t_3 \eta)
  \end{align*}
  Then 
  \begin{align}\label{F3-ratio}
    \frac{t_3^{q-1} {F}_3(u, t_3 \eta)}{F_1(u, \eta)} &=
    t_3^{q-1} \abs{u}^{2n-1} \abs{\eta}^{-2n-m}
    (2\pi-t_3 \abs{\eta})^{2n-1} e^{-\frac{1}{4} (\abs{u} \abs{\eta})^2
      (t_3^2 - 1)}.
  \end{align}
  We must show that this ratio is bounded.  Fix some $\epsilon > 0$. If $\abs{u} \ge
  (\pi-\epsilon)^{-1/2} > \pi^{-1/2}$, we have
  $2\pi-\frac{1}{\abs{u}^2} > \pi + \epsilon$ and thus $t_3 =
  \frac{1}{\abs{\eta}}\left(2\pi-\frac{1}{\abs{u}^2}\right)$.  Then
  \begin{align*}
     \abs{\eta}^2  (t_3^2 - 1) &= \left(2\pi-\frac{1}{\abs{u}^2}\right)^2 -
     \abs{\eta}^2 \\
     &\ge (\pi + \epsilon)^2 - \pi^2 = 2 \pi \epsilon + \epsilon^2.
  \end{align*}
  So in this case (\ref{F3-ratio}) becomes
  \begin{align*}
    \frac{t_3^{q-1} {F}_3(u, t_3 \eta)}{F_1(u, \eta)} &=
    \left(\frac{1}{\abs{\eta}}\left(2\pi-\frac{1}{\abs{u}^2}\right)\right)^{q-1}
    \abs{u}^{2n-1} \abs{\eta}^{-2n-m}
    \left(\frac{1}{\abs{u}^2}\right)^{2n-1} e^{-\frac{1}{4} (\abs{u} \abs{\eta})^2
      (t_3^2 - 1)} \\
    &= \left(2\pi-\frac{1}{\abs{u}^2}\right)^{q-1} \abs{u}^{-2n+1}
    \abs{\eta}^{-2n-m-q+1}  e^{-\frac{1}{4} (\abs{u} \abs{\eta})^2
      (t_3^2 - 1)} \\
    &\le (2\pi)^{q-1} \abs{u}^{m+q} e^{-\frac{1}{4}(2 \pi \epsilon + \epsilon^2)\abs{u}^2}
  \end{align*}
  since $\abs{\eta} \le \frac{1}{\abs{u}}$.  This is certainly bounded
  by some constant.  On the other hand, if $\abs{u} \le
  (\pi-\epsilon)^{-1/2}$, then $\abs{\eta} \ge (\pi-\epsilon)^{1/2}$
  and $1 \le t_3 \le \left(\frac{\pi + \epsilon}{\pi -
    \epsilon}\right)^{1/2}$, so that the right side of (\ref{F3-ratio})
  is clearly bounded.

  Thus we have
  \begin{align*}
    \int_{t=t_3}^{t=\frac{2\pi}{\abs{\eta}}} p_1(u,t\eta) A(u,t\eta) t^{q}\,dt &\le
    \frac{C'}{(\abs{u}\abs{\eta})^2} {F}_1(u,\eta) \\
    &\le \frac{C''}{(\abs{u}\abs{\eta})^2} p_1(u, \eta) A(u, \eta)
  \end{align*}
  This completes the proof of this case.
\end{case}

\begin{case}
  Suppose that $\Phi(u,\eta) \in R_2$.  We have
  \begin{equation*}
    \int_{t=1}^{t=\frac{2\pi}{\abs{\eta}}} p_1(u,t\eta) A(u,t\eta)
    t^{q}\,dt = \int_{t=1}^{t=t_3} + \int_{t=t_3}^{t=\frac{2\pi}{\abs{\eta}}}.
  \end{equation*}
  Note that in this region we have $1 \le t_3 \le 2$.
  Again by Claim \ref{single-region}, with $\tau_0 = \tau_1 = 1$ and
  $\tau_2 = t_3$, we have
  \begin{align*}
    \int_{t=1}^{t=t_3} p_1(u,t\eta) A(u,t\eta) t^{q}\,dt 
    \le  \frac{C}{(\abs{u}\abs{\eta})^2} F_2(u,\eta) 
    \le  \frac{C'}{(\abs{u}\abs{\eta})^2} p_1(u,\eta) A(u,\eta).
  \end{align*}
  For the second integral, we apply Claim \ref{single-region} with
  $\tau_0 = 1$, $\tau_1 = t_3$, $\tau_2 = \frac{2\pi}{\abs{\eta}}$ to get
  \begin{align*}
    \int_{t=t_3}^{t=\frac{2\pi}{\abs{\eta}}} p_1(u,t\eta) A(u,t\eta)
    t^{q}\,dt &\le  \frac{C}{(\abs{u}\abs{\eta})^2} F_3(u,
    \eta).
  \end{align*}
  But $\abs{\eta} \ge 2\pi - \frac{1}{\abs{u}^2}$ on $R_3$, so we have
  \begin{align*}
    \frac{F_3(u, \eta)}{F_2(u, \eta)} &= \abs{u}^{2n-1}
      (2\pi-\abs{\eta})^{n-\frac{1}{2}} \\
      &\le \abs{u}^{2n-1}
        \left(\frac{1}{\abs{u}^2}\right)^{n-\frac{1}{2}} = 1.      
  \end{align*}
  Thus
  \begin{align*}
    \int_{t=t_3}^{t=\frac{2\pi}{\abs{\eta}}} p_1(u,t\eta) A(u,t\eta) t^{q}\,dt &\le
    \frac{C'}{(\abs{u}\abs{\eta})^2} {F}_2(u,\eta) \\
    &\le \frac{C''}{(\abs{u}\abs{\eta})^2} p_1(u, \eta) A(u, \eta).
  \end{align*}
\end{case}

\begin{case}
  Suppose $\Phi(u,\eta) \in R_3$; we apply Claim \ref{single-region}
  with $\tau_0 = \tau_1 = 1$, $\tau_2 = \frac{2\pi}{\abs{\eta}}$ to
  get
  \begin{equation*}
    \int_{t=1}^{t=\frac{2\pi}{\abs{\eta}}} p_1(u,t\eta) A(u,t\eta) t^{q}\,dt \le C
    \frac{1}{(\abs{u}\abs{\eta})^2} F_3(u,\eta) \le C' p_1(u,\eta) A(u,\eta).
  \end{equation*}
\end{case}

The three cases together complete the proof of Lemma \ref{geodesic-integral-bound}.
\end{proof}

\begin{notation}
   For $f \in C^1(G)$, let $m_f := \frac{\int_B
     f\,d\haar}{\int_B d\haar}$, where $B$ is the Carnot-Carath\'eodory unit ball.  .
\end{notation}

To continue to follow the line of \cite{bbbc-jfa}, we need the
following Poincar\'e inequality.  This theorem can be found in
\cite{jerison}, and is a special case of a more general theorem
appearing in \cite{maheux-saloffcoste}.

\begin{theorem}\label{poincare}
  There exists a constant $C$ such that for any $f \in C^\infty(G)$,
  \begin{equation}\label{poincare-eq}
    \int_B \abs{f - m_f}\,d\haar \le C \int_B \abs{\grad f}\,d\haar.
  \end{equation}
\end{theorem}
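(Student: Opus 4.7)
My plan is to prove the Poincar\'e inequality via a representation-formula argument, following Jerison's strategy from \cite{jerison} but exploiting the explicit structure we have in the H-type setting. Because $B$ has CC-diameter at most $2$, for each pair $g, h \in B$ there is a horizontal length-minimizing path $\gamma_{g,h} : [0,1] \to G$ joining $g$ to $h$ with constant speed $\norm{\dot\gamma_{g,h}(s)} = d(g,h) \le 2$. The fundamental theorem of calculus along $\gamma_{g,h}$ gives
\[
  \abs{f(g) - f(h)} \le \int_0^1 \norm{\dot\gamma_{g,h}(s)}\, \abs{\grad f(\gamma_{g,h}(s))}\, ds,
\]
and averaging over $h \in B$ yields
\[
  \abs{f(g) - m_f} \le \frac{1}{\haar(B)} \int_B \int_0^1 \norm{\dot\gamma_{g,h}(s)}\, \abs{\grad f(\gamma_{g,h}(s))}\, ds\, d\haar(h).
\]

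The next step is to rewrite the right-hand side as a kernel representation
\[
  \abs{f(g) - m_f} \le C \int_G \abs{\grad f(k)}\, K(g,k)\, d\haar(k)
\]
by changing variables from $(h,s)$ to $k = \gamma_{g,h}(s)$. After left-translating by $g^{-1}$ so that $g$ becomes the identity, this change of variables can be realized concretely via the $(u,\eta)$-parametrization introduced earlier, whose Jacobian is exactly $A(u,\eta)$ from the lemma preceding Corollary \ref{A-estimates}. Integrating over $g \in B$ and applying Fubini, the Poincar\'e inequality then reduces to the uniform kernel bound
\[
  \sup_{k \in G} \int_B K(g,k)\, d\haar(g) \le C.
\]

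The main obstacle is exactly this kernel estimate. Since $A(u,\eta)$ degenerates near $\abs{\eta}\to 0$ and $\abs{\eta}\to 2\pi$ (as quantified in Corollary \ref{A-estimates}), the kernel $K$ has a singularity along the diagonal that must be integrated carefully. The standard way to control it is to use the doubling property of Haar measure with respect to CC-balls---which here follows immediately from the dilation invariance $\haar(\varphi_\alpha B) = \alpha^{2n+2m}\haar(B)$ together with the left-invariance of both $\haar$ and $d$---combined with a chain/covering argument that replaces a single long minimizing geodesic by a concatenation of short horizontal segments supported in balls where a simple local Poincar\'e inequality holds. An alternative and more hands-on route would be a direct case analysis in $(u,\eta)$ coordinates, in the spirit of the proof of Lemma \ref{geodesic-integral-bound}, exploiting the explicit asymptotics of $A(u,\eta)$. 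Either way, the argument falls within the scope of the general theorems of \cite{jerison} and \cite{maheux-saloffcoste} for arbitrary H\"ormander systems, so the cleanest option in practice is simply to cite them.
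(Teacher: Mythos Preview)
Your final recommendation---to simply cite \cite{jerison} and \cite{maheux-saloffcoste}---is exactly what the paper does: Theorem~\ref{poincare} is stated without proof and attributed to those references. The representation-formula sketch you give beforehand is a reasonable outline of Jerison's argument, but the paper does not attempt any such sketch; it treats the inequality as a black box from the literature. So your proposal is correct and, at the point where you actually commit to a proof, identical to the paper's approach, only more verbose.
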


\begin{corollary}\label{poincare-p_1}
    There exists a constant $C$ such that for any $f \in C^\infty(G)$,
  \begin{equation}\label{poincare-p_1-eq}
    \int_B \abs{f - m_f} p_1 \,d\haar \le C \int_B \abs{\grad f}p_1 \,d\haar.
  \end{equation}
\end{corollary}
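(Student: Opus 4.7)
The plan is to deduce the weighted inequality from the unweighted Poincaré inequality of Theorem \ref{poincare} by exploiting the fact that $p_1$ is uniformly comparable to a positive constant on the compact unit ball $B$.

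First I would observe that $p_1$ is a smooth, strictly positive function on $G$ (from the explicit formula \eqref{pt-formula}, or equivalently from the heat kernel estimate \eqref{p1-estimate} which gives $p_1 > 0$ everywhere). Since $\overline{B}$ is compact in $G$ (with respect to the Euclidean topology, which coincides with the Carnot--Carath\'eodory topology by Chow's theorem and the estimate $d(0,(x,z)) \asymp |x| + |z|^{1/2}$), the continuous function $p_1$ attains its maximum and minimum on $\overline{B}$, and both are strictly positive. Thus there exist constants $0 < c_1 \le c_2 < \infty$ such that $c_1 \le p_1(g) \le c_2$ for all $g \in B$.

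Given this, the corollary is immediate: applying the bounds on $p_1$ and then Theorem \ref{poincare},
\begin{equation*}
\int_B |f - m_f|\, p_1 \, d\haar \le c_2 \int_B |f - m_f|\,d\haar \le c_2 C \int_B |\grad f|\,d\haar \le \frac{c_2 C}{c_1} \int_B |\grad f|\, p_1 \,d\haar,
\end{equation*}
so setting $C' := c_2 C / c_1$ gives the claimed inequality.

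There is no real obstacle here; the only thing to verify carefully is the positivity and continuity of $p_1$ on $\overline{B}$, which is a standard consequence of hypoellipticity and the explicit formula. One minor notational point: the statement of Corollary \ref{poincare-p_1} is written for $f \in C^\infty(G)$, matching Theorem \ref{poincare}, so no density argument is needed at this stage.
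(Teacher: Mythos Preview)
Your proof is correct and follows exactly the same approach as the paper, which simply notes that $p_1$ is bounded and bounded below away from $0$ on $B$; you have just written out the details of why this yields the weighted Poincar\'e inequality from the unweighted one.
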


%

\begin{proof}
  $p_1$ is bounded and bounded below away from $0$ on $B$.
\end{proof}

\begin{lemma}[akin to Lemma 5.2 of \cite{bbbc-jfa}]\label{bbbc-52}
  There exists a constant $C$ such that for all $f \in \funcclass$,
  \begin{equation}\label{bbbc-52-eq}
    \int_{B^C} \abs{f-m_f} p_1\,d\haar \le C \int_G \abs{\grad f} p_1\,d\haar.
  \end{equation}
\end{lemma}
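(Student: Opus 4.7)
The plan is to decompose, via the triangle inequality, $\abs{f(g) - m_f} \le \abs{f(g) - f(g^*)} + \abs{f(g^*) - m_f}$, where for each $g = \Phi(u, \eta) \in B^C$ the point $g^* := \Phi(u, \hat\eta / \abs{u}) \in \partial B$ is the intersection of $\partial B$ with the minimal geodesic from $0$ to $g$, and then to estimate the two resulting integrals against $p_1 \, d\haar$ over $B^C$ separately.

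For the first piece I would begin from $\abs{f(g) - f(g^*)} \le \int_1^{\abs{u}\abs{\eta}} \abs{\grad f(\Phi(u, s\hat\eta/\abs{u}))} \, ds$, a direct consequence of (\ref{geodesic-grad}) applied along the geodesic $s \mapsto \Phi(u, s\hat\eta/\abs{u})$.  Inserting this bound into $\int_{B^C} \abs{f(g) - f(g^*)} p_1 \, d\haar$, passing to coordinates $(u, \hat\eta, \alpha)$ with $\eta = \alpha\hat\eta$, and swapping the $s$- and $\alpha$-integrations via Fubini, the inner $\alpha$-integral becomes precisely of the form controlled by Lemma \ref{geodesic-integral-bound}.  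Applying the sharper form (\ref{geodesic-integral-bound-stronger}), the length factor $d(0,g) = \abs{u}\abs{\eta}$ produced by the geodesic is exactly cancelled by the $1/(\abs{u}\abs{\eta})^2$ gain, yielding
\[
\int_{B^C} \abs{f(g) - f(g^*)} p_1 \, d\haar \le C \int_{B^C} \frac{\abs{\grad f}}{d(0, g)} p_1 \, d\haar \le C \int_G \abs{\grad f} p_1 \, d\haar,
\]
where the last inequality uses $d(0, g) \ge 1$ on $B^C$.

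For the second piece I would exploit that $g^*$ depends on $g$ only through $(u, \hat\eta)$.  Integrating first in the $\abs{\eta}$-direction and applying Lemma \ref{geodesic-integral-bound} once more transforms the second integral into one of $\abs{f(g^*) - m_f}$ against a measure $d\mu$ on $\partial B$ whose density with respect to $du \, d\sigma(\hat\eta)$ is $\abs{u}^{-m} p_1(g^*) A(g^*)$.  After comparing $d\mu$ with the natural Hausdorff surface measure on $\partial B$, invoking a subelliptic trace inequality together with Theorem \ref{poincare} applied on a slightly enlarged Carnot-Carath\'eodory ball $B' \supset \bar B$ would give $\int_{\partial B} \abs{f - m_f} \, d\sigma \le C \int_{B'} \abs{\grad f} \, d\haar$.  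Since $p_1$ is uniformly bounded below on $B'$, this yields the required bound by $C \int_G \abs{\grad f} p_1 \, d\haar$.

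The main obstacle is the control of the second piece: (a) a Jacobian computation, based on the area element $A(u,\eta)$ from the preceding lemma in Section 3, showing that $d\mu$ has bounded density relative to the natural surface measure on $\partial B$, with particular care near the degenerate region $\abs{u} \to (2\pi)^{-1}$; and (b) invoking a subelliptic trace-Poincar\'e inequality appropriate to the H-type setting.  Both are technical but standard in spirit, and together they reduce the second piece to the first piece's bound up to constants.
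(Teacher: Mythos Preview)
Your treatment of the first piece, $\abs{f(g)-f(g^*)}$, is essentially identical to the paper's: both integrate $\abs{\grad f}$ along the geodesic, swap the order of integration, and invoke Lemma~\ref{geodesic-integral-bound} to absorb the tail integral into $p_1 A$ at the inner endpoint. The paper packages this by introducing the auxiliary function $g(u,\eta):=f\bigl(u,\min(\abs{\eta},1/\abs{u})\,\hat\eta\bigr)$ and bounding $\int_{B^C}\abs{f-g}\,p_1\,d\haar$, but the mechanism is the same as yours.

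The second piece is where the approaches diverge. You propose to view $\int_{B^C}\abs{f(g^*)-m_f}\,p_1\,d\haar$ as an integral over $\partial B$ against a certain measure $d\mu$, then compare $d\mu$ with Hausdorff surface measure and invoke a subelliptic $L^1$ trace--Poincar\'e inequality on an enlarged ball. The paper avoids both ingredients by an averaging trick: starting from
\[
\Bigl|f\bigl(u,\tfrac{1}{\abs{u}}\hat\eta\bigr)-m_f\Bigr|
\le \Bigl|f\bigl(u,\tfrac{1}{\abs{u}}\hat\eta\bigr)-f\bigl(u,\tfrac{s}{\abs{u}}\hat\eta\bigr)\Bigr|
+\Bigl|f\bigl(u,\tfrac{s}{\abs{u}}\hat\eta\bigr)-m_f\Bigr|,
\]
it averages in $s\in[0,1]$ against the weight $\frac{s^{m-1}}{\abs{u}^m}A(u,s/\abs{u})$, chosen precisely so that the second term becomes $\int_B\abs{f-m_f}\,d\haar$, to which the ordinary Poincar\'e inequality (Theorem~\ref{poincare}) applies directly. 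The first term is again a geodesic integral of $\abs{\grad f}$ and is handled as before. The only additional work is a uniform bound on the ratio $R(u)$ of the outer $\rho$-integral to the averaging weight $D(u)$, which the paper proves as a short claim using Lemma~\ref{geodesic-integral-bound} and Corollary~\ref{A-estimates}.

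Your route is plausible in principle but imports two nontrivial external facts. For~(a), comparing $d\mu$ to the intrinsic surface measure on $\partial B$ requires understanding both near the characteristic locus $x=0$ (i.e.\ $\abs{u}\to(2\pi)^{-1}$, $\abs{\eta}\to 2\pi$), where the Jacobian $A$ and the surface measure each degenerate; matching the degeneracies is exactly the kind of computation the paper's averaging trick sidesteps. For~(b), an $L^1$ trace--Poincar\'e inequality for the Carnot--Carath\'eodory sphere is not among the tools the paper has set up, and you would need to supply or cite it. The paper's argument is more self-contained: it never leaves the solid ball and uses only the already-stated Poincar\'e inequality.
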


\begin{proof}
  Changing to $(u,\eta)$ coordinates, we wish to show
  \begin{equation}
    \int_{\abs{u} \ge \frac{1}{2\pi}} \int_{\frac{1}{\abs{u}} \le
      \abs{\eta} < 2\pi} \abs{f(\Phi(u,\eta))-m_f}p_1(\Phi(u,\eta))
    A(u,\eta)\,d\eta\,du \le C \int_{G} \abs{\grad f} p_1 \,d\haar.
  \end{equation}
  (The limits of integration are as described in (\ref{BCPhi}).)  By
  an abuse of notation we shall write $f(u, \eta)$ for
  $f(\Phi(u,\eta))$, $p_1(u,\eta)$ for $p_1(\Phi(u,\eta))$, $\grad
  f(u,\eta)$ for $(\grad f)(\Phi(u,\eta))$, et cetera.

  Let $g(u,\eta) := f\left(u,
  \min\left(\abs{\eta},\frac{1}{\abs{u}}\right)
  \frac{\eta}{\abs{\eta}}\right)$.  Then $g=f$ on $B$ (in particular
  $m_g = m_f$), $g$ is bounded, the function $s \mapsto g(u, s\eta)$
  is absolutely continuous, and $\frac{d}{ds}g(u,s\eta) = 0$ for $s >
  \frac{1}{\abs{u}\abs{\eta}}$.

  Now $\abs{f - m_f} \le \abs{f-g} + \abs{g - m_f}$.  We first
  observe that for $\abs{u}\abs{\eta} \ge 1$ we have
  \begin{align*}
    \abs{f(u,\eta) - g(u,\eta)} &=
    \abs{\int_{s=\frac{1}{\abs{u}\abs{\eta}}}^{s=1} \left(\frac{d}{ds}
      f(u,s\eta)-\cancel{\frac{d}{ds}g(u,s\eta)}\right)\,ds} \\
    &\le \int_{s=\frac{1}{\abs{u}\abs{\eta}}}^{s=1} \abs{\grad
      f(u,s\eta)} \abs{u} \abs{\eta}\,ds
  \end{align*} 
  by (\ref{geodesic-grad}).  Thus
  \begin{align*}
    \int_{B^C} \abs{f-g}p_1\,d\haar &= \int_{\abs{u} \ge \frac{1}{2\pi}} \int_{\abs{\eta} \ge
      \frac{1}{\abs{u}}} \abs{f(u,\eta)-g(u,\eta)}p_1(u,\eta)
    A(u,\eta)\,d\eta\,du
\\
\intertext{where the limits of integration come from the conditions
  $\abs{u} \abs{\eta} \ge 1$, $\abs{\eta} < 2\pi$;}
 &\le \int_{\abs{u} \ge \frac{1}{2\pi}} \int_{\abs{\eta} \ge
      \frac{1}{\abs{u}}} \int_{s=\frac{1}{\abs{u}\abs{\eta}}}^{s=1} \abs{\grad
      f(u,s\eta)} \abs{u} \abs{\eta}p_1(u,\eta)
    A(u,\eta)\,ds\,d\eta\,du \\
 &= \int_{\abs{u} \ge \frac{1}{2\pi}} \int_{s=0}^{s=1}
    \int_{\frac{1}{s\abs{u}} \le \abs{\eta} \le 2\pi} \abs{\grad
      f(u,s\eta)} \abs{u} \abs{\eta}p_1(u,\eta)
    A(u,\eta)\,d\eta\,ds\,du
\intertext{by Tonelli's theorem.  We now make the change of variables $\eta' =
  s\eta$ to obtain}
&= \int_{\abs{u} \ge \frac{1}{2\pi}} \int_{s=0}^{s=1}
    \int_{\frac{1}{\abs{u}} \le \abs{\eta'} \le 2\pi s} \abs{\grad
      f(u,\eta')} \abs{u} \frac{1}{s}\abs{\eta'}p_1\left(u,\frac{1}{s}\eta'\right)
    A\left(u,\frac{1}{s}\eta'\right) \frac{1}{s^m}\,d\eta'\,ds\,du \\
    &= \int_{\abs{u} \ge \frac{1}{2\pi}} \int_{\frac{1}{\abs{u}} \le \abs{\eta'} \le 2\pi} \abs{\grad
      f(u,\eta')} \abs{u} \abs{\eta'} \\
    &\quad \times \left(\int_{s=\frac{\abs{\eta'}}{2\pi}}^{s=1}
     p_1\left(u,\frac{1}{s}\eta'\right)
    A\left(u,\frac{1}{s}\eta'\right) \frac{1}{s^{m+1}}\,ds\right)\,d\eta'\,du
\intertext{Make the further change of variables $t=\frac{1}{s}$ to
  get}
    &= \int_{\abs{u} \ge \frac{1}{2\pi}} \int_{\frac{1}{\abs{u}} \le \abs{\eta'} \le 2\pi} \abs{\grad
      f(u,\eta')} \abs{u} \abs{\eta'} \left(\int_{t=1}^{t=\frac{2\pi}{\abs{\eta'}}}
     p_1(u,t\eta')
    A(u,t\eta') t^{m-1}\,dt\right)\,d\eta'\,du.
    \intertext{Applying Lemma \ref{geodesic-integral-bound} to the
      bracketed term gives}
    &\le C \int_{\abs{u} \ge \frac{1}{2\pi}} \int_{\frac{1}{\abs{u}} \le
      \abs{\eta'} \le 2\pi} \frac{1}{\abs{u} \abs{\eta'}} \abs{\grad
      f(u,\eta')} p_1(u,\eta') A(u,\eta') \,d\eta' \,du  \\
    &\le C' \int_{B^C} \abs{\grad f} p_1\,d\haar
  \end{align*}
  converting back from geodesic coordinates and using the fact that
  $\abs{u}\abs{\eta'} \ge 1$.
  
  To complete the proof, we must show that $\int_{B^C} \abs{g - m_f}
  p_1\,d\haar \le \int_G \abs{\grad f}p_1\,d\haar$.  Note that
  for $\Phi(u,\eta) \in B^C$, i.e. $\abs{u} \abs{\eta} \ge 1$, we
  have $g(u,\eta) = f\left(u, \frac{1}{\abs{u}
    \abs{\eta}}\eta\right)$, so
  \begin{align}
    \int_{B^C} \abs{g - m_f} p_1\,d\haar &= \int_{\abs{u} \ge \frac{1}{2\pi}}
    \int_{\frac{1}{\abs{u}} \le \abs{\eta} \le 2\pi } \abs{f\left(u, \frac{1}{\abs{u}
    \abs{\eta}}\eta\right) - m_f} p_1(u,\eta) A(u,\eta) \,d\eta\,du.
    \intertext{Change the $\eta$ integral to polar coordinates by
      writing $\eta = \rho \hat{\eta}$, where $\rho \ge 0$ and
      $\abs{\hat{\eta}} = 1$.  Note that $p_1(u,\eta),
      A(u,\eta)$ depend on $\eta$ only through $\rho$ and not $\hat{\eta}$.}
&= C \int_{\abs{u} \ge \frac{1}{2\pi}}
    \int_{\hat{\eta} \in S^{m-1}} \abs{f\left(u, \frac{1}{\abs{u}}
      \hat{\eta}\right) - m_f}
    \int_{\rho=\frac{1}{\abs{u}}}^{\rho=2\pi} p_1(u,\rho) A(u,\rho)
    \rho^{m-1}\,d\rho \,d\hat{\eta}\,du \label{foogle}
  \end{align}
  Now, for any $s \in [0,1]$ we have
  \begin{equation}\label{gmf1}
    \abs{f\left(u, \frac{1}{\abs{u}}\hat{\eta}\right) - m_f} \le
    \abs{f\left(u, \frac{1}{\abs{u}}\hat{\eta}\right) - f\left(u,
      \frac{s}{\abs{u}}\hat{\eta}\right)} + \abs{ f\left(u,
      \frac{s}{\abs{u}}\hat{\eta}\right) - m_f}.
  \end{equation}
  Let 
\begin{equation}
D(u) := \int_{s=0}^{s=1} \frac{s^{m-1}}{\abs{u}^m} A\left(u,
  \frac{s}{\abs{u}}\right)\,ds.
\end{equation}
By multiplying both sides of (\ref{gmf1}) by
  $\frac{1}{D(u)}\frac{s^{m-1}}{\abs{u}^m} A\left(u,
  \frac{s}{\abs{u}}\right)$ and integrating we obtain
  \begin{equation}\label{xyzzy}
    \begin{split}
    \abs{f\left(u, \frac{1}{\abs{u}}\hat{\eta}\right) - m_f} &\le \frac{1}{D(u)} \int_{s=0}^{s=1} \left(\abs{f\left(u, \frac{1}{\abs{u}}\hat{\eta}\right) - f\left(u,
      \frac{s}{\abs{u}}\hat{\eta}\right)} + \abs{ f\left(u,
      \frac{s}{\abs{u}}\hat{\eta}\right) - m_f}\right) \\
    &\quad \times \frac{s^{m-1}}{\abs{u}^m} A\left(u,
  \frac{s}{\abs{u}}\right)\,ds.
  \end{split}
  \end{equation}
  Let
  \begin{equation}\label{R-def}
    R(u) := \frac{1}{D(u)} \int_{\rho=\frac{1}{\abs{u}}}^{\rho=2\pi}
      p_1(u,\rho) A(u,\rho) \rho^{m-1}\,d\rho.
  \end{equation}
  Then substituting (\ref{xyzzy}) into (\ref{foogle}) and using
  (\ref{R-def}) we have
  \begin{equation}
    \int_{B^C} \abs{g-m_f}p_1\,d\haar \le I_1 + I_2
  \end{equation}
  where
  \begin{align}
     I_1 &:= \int_{\abs{u} \ge \frac{1}{2\pi}}
    \int_{\hat{\eta} \in S^{m-1}} 
\int_{s=0}^{s=1} \abs{f\left(u, \frac{1}{\abs{u}}\hat{\eta}\right) - f\left(u, \frac{s}{\abs{u}}\hat{\eta}\right)} \frac{s^{m-1}}{\abs{u}^m} A\left(u,
\frac{s}{\abs{u}}\right)\,ds \,
   R(u) 
   \,d\hat{\eta}\,du \label{I1-def} \\
     I_2 &:= \int_{\abs{u} \ge \frac{1}{2\pi}}
    \int_{\hat{\eta} \in S^{m-1}} 
\int_{s=0}^{s=1} \abs{ f\left(u, \frac{s}{\abs{u}}\hat{\eta}\right) -
  m_f} \frac{s^{m-1}}{\abs{u}^m} A\left(u,
\frac{s}{\abs{u}}\right)\,ds\, R(u) \,d\hat{\eta}\,du. \label{I2-def}
  \end{align}
  We now show that $I_1$, $I_2$ can each be bounded by a constant
  times $\int_G \abs{\grad f}p_1\,d\haar$, using the
  following claim.
  \begin{claim}\label{R-claim}
    There exists a constant $C$ such that for all $\abs{u} \ge
    \frac{1}{2\pi}$ we have
    \begin{equation}
      R(u) \le C
      \left(2\pi-\frac{1}{\abs{u}}\right)^{2n-1} \le (2\pi)^{2n-1}C.
    \end{equation}
  \end{claim}

  \begin{proof}[Proof of Claim]
    First, by Corollary \ref{A-estimates} we have
    \begin{align*}
      D(u) &:= \int_{s=0}^{s=1} \frac{s^{m-1}}{\abs{u}^m} A\left(u,
  \frac{s}{\abs{u}}\right)\,ds \\
  &\ge C \int_{s=0}^{s=1} \frac{s^{m-1}}{\abs{u}^m} \abs{u}^{2m}
  \left(\frac{s}{\abs{u}}\right)^{2(m+n)} \left(2\pi -
  \frac{s}{\abs{u}}\right)^{2n-1}\,ds \\
  &= C \abs{u}^{-2n-m} \int_{s=0}^{s=1} s^{3m+2n-1} \left(2\pi -
  \frac{s}{\abs{u}}\right)^{2n-1}\,ds \\
  &\ge C \abs{u}^{-2n-m}  \int_{s=0}^{s=1} s^{3m+2n-1}
  \left(2\pi(1-s)\right)^{2n-1}\,ds && \text{since $u \ge
    \frac{1}{2\pi}$} \\
  &= C' \abs{u}^{-2n-m}
    \end{align*}
    since the $s$ integral is a positive constant independent of $u$.

    On the other hand, making the change of variables $\rho =
    \frac{t}{\abs{u}}$ shows
    \begin{align*}
      \int_{\rho=\frac{1}{\abs{u}}}^{\rho=2\pi}
      p_1(u,\rho) A(u,\rho) \rho^{m-1}\,d\rho &= \abs{u}^{-m} \int_{t=1}^{t=2\pi\abs{u}}
      p_1\left(u,\frac{t}{\abs{u}}\right) A\left(u,\frac{t}{\abs{u}}\right) t^{m-1}\,dt \\
      &\le C \abs{u}^{-m} p_1\left(u, \frac{1}{\abs{u}}\right) A\left(u, \frac{1}{\abs{u}}\right)
    \end{align*}
    by taking $\abs{\eta} = \frac{1}{\abs{u}}$ in Lemma
    \ref{geodesic-integral-bound}.  Now $p_1\left(u, \frac{1}{\abs{u}}\right)$ is the heat
    kernel evaluated at a point on the unit sphere of $G$, so this is
    bounded by a constant independent of $u$.  Thus by Corollary
    \ref{A-estimates} we have
    \begin{align*}
       \int_{\rho=\frac{1}{\abs{u}}}^{\rho=2\pi}
      p_1(u,\rho) A(u,\rho) \rho^{m-1}\,d\rho &\le C \abs{u}^{-m}
      \abs{u}^{2m} \left(\frac{1}{\abs{u}}\right)^{2(m+n)} \left(2\pi-\frac{1}{\abs{u}}\right)^{2n-1} \\
      &\le C \left(2\pi-\frac{1}{\abs{u}}\right)^{2n-1} \abs{u}^{-2n-m}.
    \end{align*}
    Combining this with the estimate on $D(u)$ proves the claim.
  \end{proof}

  To estimate $I_1$ (see (\ref{I1-def})), we observe that
  \begin{align*}
    \abs{f\left(u, \frac{1}{\abs{u}}\hat{\eta}\right) - f\left(u,
      \frac{s}{\abs{u}}\hat{\eta}\right)} &= \abs{\int_{t=s}^{t=1} \frac{d}{dt}
      f\left(u, \frac{t}{\abs{u}}\hat{\eta}\right)\,dt} \\
    &\le \int_{t=s}^{t=1} \abs{\frac{d}{dt}
      f\left(u, \frac{t}{\abs{u}}\hat{\eta}\right)}\,dt \\
    &\le \int_{t=s}^{t=1} \abs{\grad f\left(u,
        \frac{t}{\abs{u}}\hat{\eta}\right)}\,dt
  \end{align*}
  by (\ref{geodesic-grad}).  Thus
  \begin{align}
    I_1 &\le \int_{\abs{u} \ge \frac{1}{2\pi}} \int_{\hat{\eta} \in
      S^{m-1}} \int_{s=0}^{s=1} \int_{t=s}^{t=1} \abs{\grad f\left(u,
      \frac{t}{\abs{u}}\hat{\eta}\right)} \frac{s^{m-1}}{\abs{u}^m} A\left(u,
    \frac{s}{\abs{u}}\right)\,dt\,ds \, R(u) \,d\hat{\eta}\,du \\
    &= \int_{\abs{u} \ge \frac{1}{2\pi}} \int_{\hat{\eta} \in
      S^{m-1}} \int_{t=0}^{t=1} \abs{\grad f\left(u,
      \frac{t}{\abs{u}}\hat{\eta}\right)}
    \frac{1}{\abs{u}^m}\left(R(u)\int_{s=0}^{s=t}  s^{m-1} A\left(u, 
    \frac{s}{\abs{u}}\right)\,ds\right)\,dt \, d\hat{\eta}\,du.
  \end{align}
  Now by Claim \ref{R-claim} and Corollary \ref{A-estimates}, we have
  for all $t \in [0,1]$: 
  \begin{align*}
    R(u) \int_{s=0}^{s=t}  s^{m-1} A\left(u,
    \frac{s}{\abs{u}}\right)\,ds &\le C 
      \left(2\pi-\frac{1}{\abs{u}}\right)^{2n-1} \int_{s=0}^{s=t}
      s^{m-1} \abs{u}^{2m} \left(\frac{s}{\abs{u}}\right)^{2(m+n)}
      \left(2\pi-\frac{s}{\abs{u}}\right)^{2n-1}\,ds \\
      &\le C 
      \left(2\pi-\frac{t}{\abs{u}}\right)^{2n-1} (2\pi)^{2n-1}
      \abs{u}^{-2n} \int_{s=0}^{s=t}
      s^{3m+2n-1}\,ds \\
      &= C' \left(2\pi-\frac{t}{\abs{u}}\right)^{2n-1} \abs{u}^{-2n}
      t^{3m+2n} \\
      &= C' \left(2\pi-\frac{t}{\abs{u}}\right)^{2n-1} \abs{u}^{2m}
      \left(\frac{t}{\abs{u}}\right)^{2(m+n)} t^m \\
      &\le C'' A\left(u, \frac{t}{\abs{u}}\right) t^m \\
      &\le C'' A\left(u, \frac{t}{\abs{u}}\right) t^{m-1}.
  \end{align*}
  Thus
  \begin{align}
    I_1 &\le C \int_{\abs{u} \ge \frac{1}{2\pi}} \int_{\hat{\eta} \in
      S^{m-1}} \int_{t=0}^{t=1} \abs{\grad f\left(u,
      \frac{t}{\abs{u}}\hat{\eta}\right)} A\left(u, \frac{t}{\abs{u}}\right)
    \frac{t^{m-1}}{\abs{u}^m}\,dt \, d\hat{\eta}\,du \\
    \intertext{Make the change of variables $r=\frac{t}{\abs{u}}$:}
    &= C \int_{\abs{u} \ge \frac{1}{2\pi}} \int_{\hat{\eta} \in
      S^{m-1}} \int_{r=0}^{r=\frac{1}{\abs{u}}} \abs{\grad f\left(u,
      r\hat{\eta}\right)} A\left(u, r\right) r^{m-1} \,dr \,
    d\hat{\eta}\,du \\
    &\le C \int_{u \in \R^{2n}} \int_{\hat{\eta} \in
      S^{m-1}} \int_{r=0}^{r=\frac{1}{\abs{u}}} \abs{\grad f\left(u,
      r\hat{\eta}\right)} A\left(u, r\right) r^{m-1} \,dr \,
    d\hat{\eta}\,du \\
    &= C \int_{B}  \abs{\grad f}\,d\haar \label{int-B-gradf} \\
    &\le \frac{C}{\inf_B p_1} \int_B \abs{\grad f} p_1\,d\haar \\
    &\le C' \int_G  \abs{\grad f} p_1\,d\haar. \label{int-G-gradf-p_1}
  \end{align}
  where we have used the fact that $p_1$ is bounded away from $0$ on
  $B$.

  For $I_2$ (see (\ref{I2-def})), we have by Claim \ref{R-claim} that
  \begin{align}
    I_2 &\le C \int_{\abs{u} \ge \frac{1}{2\pi}}
    \int_{\hat{\eta} \in S^{m-1}} 
\int_{s=0}^{s=1} \abs{ f\left(u, \frac{s}{\abs{u}}
        \hat{\eta}\right) - m_f} \frac{s^{m-1}}{\abs{u}^m} A\left(u,
  \frac{s}{\abs{u}}\right)\,ds
    \,d\hat{\eta}\,du.
\intertext{Make the change of variables $r=\frac{s}{\abs{u}}$:}
&= C \int_{\abs{u} \ge \frac{1}{2\pi}}
    \int_{\hat{\eta} \in S^{m-1}} 
\int_{r=0}^{r=\frac{1}{\abs{u}}} \abs{ f\left(u, r\hat{\eta}\right) - m_f} r^{m-1} A\left(u,
  r\right)\,dr
    \,d\hat{\eta}\,du \\
    &\le C \int_{u \in \R^{2n}} \int_{\hat{\eta} \in S^{m-1}} 
    \int_{r=0}^{r=\frac{1}{\abs{u}}} 
    \abs{ f\left(u, r\hat{\eta}\right) - m_f} r^{m-1}
    A\left(u, r\right)\,dr\,d\hat{\eta} du \\
    &= C \int_B \abs{f-m_f}\,d\haar \\
    &\le C \int_B \abs{\grad f}\,d\haar
  \end{align}
  by Theorem \ref{poincare}.  The inequalities
  (\ref{int-B-gradf}--\ref{int-G-gradf-p_1}) now show that $I_2
  \le C' \int_G \abs{\grad f}p_1\,d\haar$, as desired.
\end{proof}


\begin{corollary}\label{cheeger-combined}
  There exists a constant $C$ such that for all $f \in \funcclass$,
  \begin{equation}
    \int_G \abs{f - m_f} p_1 \,d\haar \le C \int_G \abs{\grad f} p_1 \,d\haar.
  \end{equation}
\end{corollary}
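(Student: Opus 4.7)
The plan is to split the integral over $G$ into the contributions from the Carnot-Carath\'eodory unit ball $B$ and its complement $B^C$, and apply the two already-established estimates to each piece.

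More precisely, I would write
\begin{equation*}
  \int_G \abs{f - m_f} p_1\,d\haar = \int_B \abs{f - m_f} p_1\,d\haar + \int_{B^C} \abs{f - m_f} p_1\,d\haar.
\end{equation*}
For the first term, I apply Corollary \ref{poincare-p_1} to bound it by $C \int_B \abs{\grad f} p_1\,d\haar$, which is in turn at most $C \int_G \abs{\grad f} p_1\,d\haar$ since the integrand is nonnegative. For the second term, Lemma \ref{bbbc-52} directly gives a bound of $C \int_G \abs{\grad f} p_1\,d\haar$. Adding the two estimates and combining the constants yields the desired inequality.

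There is no real obstacle here, since the serious analysis (the Poincar\'e inequality on $B$ and the geodesic integration estimate on $B^C$) has already been done in Corollary \ref{poincare-p_1} and Lemma \ref{bbbc-52}. The only point to verify is that $f \in \funcclass$ ensures $\abs{f - m_f} p_1$ is integrable on $G$, which follows from the definition of $\funcclass$ and the heat kernel bound (\ref{p1-estimate}), so that the splitting of the integral is legitimate.
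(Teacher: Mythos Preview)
Your proposal is correct and matches the paper's proof exactly: the paper's argument is literally the one line ``Add (\ref{poincare-p_1-eq}) and (\ref{bbbc-52-eq}),'' which is precisely your splitting into $B$ and $B^C$ followed by applying Corollary~\ref{poincare-p_1} and Lemma~\ref{bbbc-52}.
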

\begin{proof}
  Add (\ref{poincare-p_1-eq}) and (\ref{bbbc-52-eq}).
\end{proof}

We can now prove some cases of the desired gradient inequality
(\ref{integral-to-show}).

\begin{notation}
  Let $\cylinder(R) = \{(x,z) : \abs{x} \le R\}$ denote the ``cylinder
  about the $z$ axis'' of radius $R$.
\end{notation}

\begin{lemma}\label{grad-ineq-cyl}
  For fixed $R > 0$, (\ref{integral-to-show}) holds, with a constant
  $C=C(R)$ depending on $R$, for all $f \in \funcclass$ which are
  supported on $\cylinder(R)$ and satisfy $m_f = 0$.
\end{lemma}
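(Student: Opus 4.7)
The plan is to exploit the algebraic identity \eqref{gradient-combos}, which gives $(\grad - \hat{\grad})f(x,z) = J_{\grad_z f(x,z)}\, x$. Crucially, the $x$-factor is harmless on $\cylinder(R)$ (bounded by $R$), and all derivatives of $f$ appearing in the integrand are in the central directions $z^j$, so that integration by parts transfers them onto $p_1$, where the estimate \eqref{gradzp1-estimate} provides the pointwise bound $\abs{\grad_z p_1} \le C p_1$. This yields an integral involving only $\abs{f}$ itself, which is then controlled via the hypothesis $m_f = 0$ and Corollary~\ref{cheeger-combined}.

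More precisely, writing $\grad_z f = \sum_j \frac{\partial f}{\partial z^j} u_j$ and using that $J$ is linear in its subscript, the $i$-th component of the vector-valued integral is
\begin{equation*}
  \int_G \bigl( (\grad - \hat{\grad})f \bigr)_i\, p_1\, d\haar = \sum_{j=1}^m \int_G \frac{\partial f}{\partial z^j}(x,z)\, (J_{u_j}x)_i\, p_1(x,z)\, d\haar.
\end{equation*}
For each fixed $x$ with $\abs{x} \le R$, Fubini's theorem reduces the $z^j$-integral to a one-dimensional one, and integration by parts in $z^j$ is justified because the $\funcclass$-growth $\abs{f} \le Me^{ad(0,\cdot)^{2-\epsilon}}$ is dominated by the Gaussian decay of $p_1$ in $\abs{z}$ coming from $d(0,(x,z))^2 \gtrsim \abs{z}$ (via \eqref{p1-estimate}); note that the coefficient $(J_{u_j}x)_i$ is independent of $z$. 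Recombining the $j$-sum yields
\begin{equation*}
  \int_G (\grad - \hat{\grad})f \cdot p_1\, d\haar = -\int_G f\, J_{\grad_z p_1}\, x\, d\haar.
\end{equation*}

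Taking norms and applying Proposition~\ref{Jz-prop}(\ref{Jz-square}) (which gives $\abs{J_{\grad_z p_1} x} = \abs{x} \abs{\grad_z p_1}$), the support condition $\abs{x} \le R$, and the heat-kernel gradient estimate \eqref{gradzp1-estimate}, we obtain
\begin{equation*}
  \abs{\int_G (\grad - \hat{\grad})f \cdot p_1\, d\haar} \le R \int_G \abs{f}\, \abs{\grad_z p_1}\, d\haar \le CR \int_G \abs{f}\, p_1\, d\haar.
\end{equation*}
Finally, since $m_f = 0$, we have $\abs{f} = \abs{f - m_f}$, and Corollary~\ref{cheeger-combined} gives
\begin{equation*}
  \int_G \abs{f}\, p_1\, d\haar = \int_G \abs{f - m_f}\, p_1\, d\haar \le C' \int_G \abs{\grad f}\, p_1\, d\haar,
\end{equation*}
which combined with the previous line produces \eqref{integral-to-show} with constant $C(R) = CR C'$.

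The only nontrivial step is the justification of the integration by parts, but this follows immediately from the quantitative decay of $p_1$ in $\abs{z}$ for $\abs{x}$ bounded, together with the sub-Gaussian $\funcclass$-growth of $f$; there is no real analytic obstacle. The essential point of the argument is algebraic: the $(\grad - \hat{\grad})$ combination kills the Euclidean $x$-gradient entirely, leaving only $z$-derivatives that can be moved onto $p_1$.
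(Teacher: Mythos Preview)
Your proof is correct and follows essentially the same route as the paper: integrate by parts to move the difference $(\grad-\hat\grad)$ onto $p_1$, use $\abs{(\grad-\hat\grad)p_1}=\abs{x}\abs{\grad_z p_1}\le CR\,p_1$ on the support of $f$ via \eqref{gradzp1-estimate}, and finish with Corollary~\ref{cheeger-combined}. The only cosmetic difference is that the paper invokes the already-established formula \eqref{byparts} for $\grad$ and $\hat\grad$ and subtracts, whereas you carry out the integration by parts directly in the $z^j$-variables; these are the same computation since $(\grad-\hat\grad)g=J_{\grad_z g}x$. One small correction: the identity $\abs{J_z x}=\abs{z}\abs{x}$ is item~2 of Proposition~\ref{Jz-prop}, not item~\ref{Jz-square}.
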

\begin{proof}
  \begin{align*}
     \abs{\int_G ((\grad-\hat{\grad})f) p_1}\,d\haar &= \abs{\int_G f
       (\grad-\hat{\grad})p_1}\,d\haar &&\text{by integration by parts (\ref{byparts})} \\
     &\le \int_G \abs{f}
     \abs{(\grad-\hat{\grad})p_1}\,d\haar \\
     &= \int_G \abs{f} \abs{x} \abs{\grad_z p_1} \,d\haar &&
     \text{by (\ref{abs-grad-difference})}\\
       &\le C R \int_G \abs{f} p_1 \,d\haar && \text{by
         (\ref{gradzp1-estimate}); note $\abs{x} \le R$ on the support
       of $f$} \\
       &\le C' R \int_G \abs{\grad f} p_1 \,d\haar && \text{by Corollary \ref{cheeger-combined}}.
  \end{align*}
\end{proof}

\begin{notation}
  If $T : G \to M_{2n \times 2n}$ is a matrix-valued function on $G$,
  with $k \ell$th entry $a_{k \ell}$, let $\nabla \cdot T : G \to
  \R^{2n}$ be defined as
  \begin{equation}
    \nabla \cdot T(g) := \sum_{k,\ell=1}^{2n} X_\ell a_{k \ell}(g) e_k.
  \end{equation}
  Note that for $f : G \to \R$ we have the product formula
  \begin{equation}\label{div-product}
    \nabla \cdot (f T) = T \grad f + f \nabla \cdot T.
  \end{equation}
  \end{notation}

\begin{lemma}\label{grad-ineq-offcyl}
  For fixed $R > 1$, (\ref{integral-to-show}) holds, with a constant
  $C=C(R)$ depending on $R$, for all $f \in \funcclass$ which are
  supported on the complement of $\cylinder(R)$.
\end{lemma}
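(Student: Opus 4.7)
The plan is to apply the divergence-IBP mechanism enabled by (\ref{div-product}): construct a matrix-valued function $T = (T_{k\ell})$ on $\{|x| > R\}$ satisfying $(\grad - \hat{\grad}) p_1 = \nabla \cdot T$ with entries pointwise bounded by $C(R) p_1$. Combined with the basic IBP (\ref{byparts}), this rewrites
\[
\int_G ((\grad - \hat{\grad}) f) p_1 \, d\haar = -\int_G f\,(\grad - \hat{\grad}) p_1 \, d\haar = -\int_G f \nabla \cdot T \, d\haar = \int_G T \grad f \, d\haar,
\]
and the bound $|T_{k\ell}| \le C(R) p_1$ yields $|\text{LHS}| \le C(R) \int |\grad f| p_1 \, d\haar$.

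The key input for constructing $T$ is the H-type identity
\[
\partial_{z^j} = \frac{1}{|x|^2}\sum_\ell (J_{u_j} x)_\ell (X_\ell - \hat{X}_\ell), \qquad |x| \ne 0,
\]
which follows from Proposition \ref{Jz-prop}, specifically $\langle J_{u_j} x, J_{u_{j'}} x\rangle = \delta_{jj'}|x|^2$. Applied to $p_1$, combined with the identity $\sum_\ell \partial_{x^\ell}[(J_{u_j} x)_\ell/|x|^2] = 0$ (a consequence of $\tr J_{u_j} = 0$ and $\langle x, J_{u_j} x\rangle = 0$), one obtains
\[
((\grad - \hat{\grad}) p_1)_k = \sum_\ell (X_\ell - \hat{X}_\ell)[P_{k\ell}(x) p_1],
\]
where $P(x) := \sum_j |x|^{-2}(J_{u_j} x)(J_{u_j} x)^T$ is the orthogonal projection onto $\spanop\{J_{u_j} x\}_j$, so $|P_{k\ell}(x)| \le 1$. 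The natural candidate $T_{k\ell} := P_{k\ell}(x) p_1$ satisfies $|T_{k\ell}| \le p_1$.

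After IBP, most terms are controlled in the expected way using $|T_{k\ell}| \le p_1$. Auxiliary terms involving derivatives of the $|x|^{-2}$ factor produce contributions of order $\int |f| p_1/|x| \, d\haar \le (1/R)\int |f| p_1 \, d\haar$, which are bounded by $(C/R) \int |\grad f| p_1 \, d\haar$ via Corollary \ref{cheeger-combined}: the corollary applies because $R > 1$ forces $\text{supp}\, f \cap B = \emptyset$, hence $m_f = 0$.

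The main obstacle is that a naive IBP of $\sum_\ell (X_\ell - \hat{X}_\ell)[P_{k\ell} p_1]$ against $f$ produces $\hat{X}_\ell f$ terms that are not pointwise controlled by $|\grad f|$, and moreover the computation is tautological since $P$ acts as the identity on the range element $(\grad - \hat{\grad}) f = J_{\grad_z f} x$. Breaking the tautology requires the refined decomposition $\partial_{z^j} p_1 = \sum_\ell X_\ell q_\ell^{(j)} - \sum_\ell \hat{X}_\ell q_\ell^{(j)}$ with $q_\ell^{(j)} := (J_{u_j} x)_\ell p_1/|x|^2$ satisfying the crucial bound $|q_\ell^{(j)}| \le p_1/|x| \le p_1/R$; IBPing each piece separately and exploiting the commutator identity $(J_{u_j} x)_k (X_\ell - \hat{X}_\ell) q_\ell^{(j)} = (X_\ell - \hat{X}_\ell)[(J_{u_j} x)_k q_\ell^{(j)}]$ (valid since $(J_{u_j}x)_k$ is $z$-independent) extracts the $1/R$ factor, which combines with Corollary \ref{cheeger-combined} to absorb the uncontrolled pieces and yield the stated bound with $C(R)$ depending on $R$.
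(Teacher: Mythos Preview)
Your setup is essentially the paper's: the matrix $P(x)=\sum_j |x|^{-2}(J_{u_j}x)(J_{u_j}x)^T$ is exactly the projection $T(x)$ the paper introduces, and the bounds $|P|\le 1$, $|\nabla\cdot P|\le C/|x|$ are the same ones used there. The divergence of the final paragraph is also correct in spirit: the error terms really are of order $\int |f|p_1/|x|\le R^{-1}\int|f|p_1$, which Corollary~\ref{cheeger-combined} handles since $m_f=0$.

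The gap is in how you claim to break the tautology. Your ``refined decomposition'' $\partial_{z^j}p_1=\sum_\ell X_\ell q_\ell^{(j)}-\sum_\ell \hat X_\ell q_\ell^{(j)}$ is just $\sum_\ell(X_\ell-\hat X_\ell)q_\ell^{(j)}$ rewritten; integrating the two pieces by parts separately and recombining returns exactly $-\int(P(\grad-\hat\grad)f)_k\,p_1=-\int((\grad-\hat\grad)f)_k\,p_1$, with no $1/R$ factor. The commutator identity you cite is likewise tautological in this context. As written, nothing has been gained.

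What is missing is the one structural fact the paper exploits: $p_1$ is \emph{radial} in $x$, so $\grad_x p_1\parallel x$ and hence $P(x)\grad_x p_1=0$. This upgrades your tautological relation to the genuine identity $P\grad p_1=\tfrac12(\grad-\hat\grad)p_1$, which involves only the \emph{left} gradient of $p_1$. The paper then integrates $0=\int_G\nabla\cdot(f\,p_1\,P)\,d\haar$ by parts; the three resulting terms are $\int f\,p_1\,\nabla\cdot P$ (controlled by $C R^{-1}\int|f|p_1$ and then Corollary~\ref{cheeger-combined}), $\int f\,P\grad p_1=\tfrac12\int f(\grad-\hat\grad)p_1$ (the quantity to be bounded), and $\int p_1\,P\grad f$ (bounded by $\int|\grad f|p_1$ since $|P|\le 1$). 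No $\hat X_\ell f$ ever appears. Equivalently, in your language, radiality kills the $\sum_\ell\partial_{x^\ell}q_\ell^{(j)}$ contribution (via $\langle J_{u_j}x,\grad_x p_1\rangle=0$), so that $\partial_{z^j}p_1=2\sum_\ell X_\ell q_\ell^{(j)}$ with only left-invariant derivatives, and then your IBP goes through cleanly. Without invoking radiality you cannot escape the circularity.
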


\begin{proof}
  Applying (\ref{gradient-combos}) we have
  \begin{equation*}
    \grad p_1(x,z) = \grad_x p_1(x,z) + \frac{1}{2}J_{\grad_z p_1(x,z)}x.
  \end{equation*}
  Now $p_1$ is a ``radial'' function (that is, $p_1(x,z)$ depends
  only on $\abs{x}$ and $\abs{z}$).  Thus we have that $\grad_x
  p_1(x,z)$ is a scalar multiple of $x$, and also that $\grad_z
  p_1(x,z)$ is a scalar multiple of $z$, so that $J_{\grad_z
    p_1(x,z)}x$ is a scalar multiple of $J_z x$.  

  For nonzero $x \in \R^{2n}$, let $T(x) \in M_{2n \times 2n}$ be
  orthogonal projection onto the $m$-dimensional subspace of $\R^{2n}$
  spanned by the orthogonal vectors $J_{u_1}x, \dots, J_{u_m}x$.
  (Recall $\inner{J_{u_i}x}{J_{u_j} x} = -\inner{u_i}{u_j} \norm{x}^2
  = -\delta_{ij} \norm{x}^2$.)  Thus for any $z \in \R^m$, $T(x) J_z x
  = J_z x$, and $T(x) x = 0$; in particular,
  \begin{equation}\label{Tgradp}
    T(x) \grad p_1(x,z) =
  \frac{1}{2}J_{\grad_z p_1(x,z)}x = \frac{1}{2}
  (\grad-\hat{\grad})p_1(x,z).
  \end{equation}

  Explicitly, we have
  \begin{equation*}
    T(x) = \frac{1}{\abs{x}^2} \sum_{j=1}^m J_{u_j} x (J_{u_j}x)^{T}.
  \end{equation*}
   Note that $\abs{T(x)} = 1$ (in operator norm) for all $x \ne 0$,
   and a routine computation verifies that $\abs{\nabla \cdot T(x)} =
   \abs{\nabla_x \cdot T(x)} \le \frac{C}{\abs{x}}$.  Indeed, the
   $k\ell$th entry of $T(x)$ is
\begin{equation*}
a_{k\ell}(x) = \frac{1}{\abs{x}^2} \sum_{j=1}^{m}
\inner{J_{u_j}x}{e_k}\inner{J_{u_j}x}{e_\ell}
\end{equation*}
so that $\abs{X_k a_{k\ell}(x)} =
\abs{\frac{\partial}{\partial x^k} a_{k\ell}(x)} \le
\frac{3m}{\abs{x}}$; thus $\abs{\nabla \cdot T(x)} \le
\frac{3m(2n)^2}{\abs{x}}$.

  Since $p_1$ decays rapidly at infinity, we have the integration by
  parts formula
  \begin{equation}\label{div-parts}
    0 = \int_G \nabla \cdot (f p_1T)\,d\haar = \int_G (f p_1 \nabla \cdot T +
    f T \grad p_1 + p_1 T \grad f) \,d\haar.
  \end{equation}
  Thus
  \begin{align*}
    \abs{\int_G ((\grad - \hat{\grad})f) p_1\,d\haar} &= \abs{\int_G f
      (\grad - \hat{\grad})p_1\,d\haar} \\
    &= 2\abs{\int_G f T \grad p_1\,d\haar} \\
    &= 2\abs{\int_G f p_1 (\nabla \cdot T +  T \grad f)\,d\haar} \\
    &\le 2\int_G \abs{f} \abs{\nabla \cdot T} p_1\,d\haar + 2 \int_G
    \abs{T} \abs{\grad f} p_1\,d\haar\\
    &\le \frac{2C}{R} \int_G \abs{f} p_1\,d\haar + 2 \int_G
    \abs{\grad f} p_1\,d\haar
  \end{align*}
  since on the support of $f$, we have $ \abs{\nabla \cdot T} \le
  \frac{C}{\abs{x}} \le \frac{C}{R}$, and $\abs{T} = 1$.  The second integral is the
  desired right side of (\ref{integral-to-show}).  The first integral
  is bounded by the same by Corollary \ref{cheeger-combined}, where we
  note that $m_f = 0$ because $f$ vanishes on $D(R) \supset B$.
\end{proof}

We can now complete the proof of Theorem \ref{main-grad-theorem}.

\begin{proof}[Proof of Theorem \ref{main-grad-theorem}]
  We prove (\ref{integral-to-show}) for general $f \in \funcclass$.
  By replacing $f$ by $f - m_f \in \funcclass$, we can assume $m_f = 0$.

  Let $\psi \in C^\infty(G)$ be a smooth function such that $\psi
  \equiv 1$ on $D(1)$ and 
  $\psi$ is supported in $D(2)$.  Then $f = \psi f + (1-\psi)f$.

  $\psi f$ is supported on $D(2)$, so Lemma \ref{grad-ineq-cyl}
  applies to $\psi f$.  (Note that $m_{\psi f} = 0$ since $\psi \equiv
  1$ on $D(1) \supset B$.)  We have
  \begin{align*}
    \abs{\int_G (\grad - \hat{\grad})(\psi f) p_1\,d\haar} &\le C \int_G
    \abs{\grad(\psi f)} p_1\,d\haar \\
   &\le C \int_G \abs{\grad \psi} \abs{f} p_1 \,d\haar + \int_G \abs{\psi}
    \abs{\grad f} p_1\,d\haar \\
    &\le C \sup_G \abs{\grad \psi} \int_G \abs{f} p_1 \,d\haar + C \sup_G
    \abs{\psi}\int_G \abs{\grad f} p_1\,d\haar.
  \end{align*}
  The second integral is the right side of (\ref{integral-to-show}),
  and the first is bounded by the same by Corollary
  \ref{cheeger-combined}.

  Precisely the same argument applies to $(1-\psi)f$, which is
  supported on the complement of $D(1)$, by using Lemma
  \ref{grad-ineq-offcyl} instead of Lemma \ref{grad-ineq-cyl}.
\end{proof}

\section{The optimal constant $K$}

We observed previously that the constant $K$ in (\ref{grad-ineq}) can
be taken to be independent of $t$.  We now show that the
\emph{optimal} constant is also independent of $t > 0$, and is
discontinuous at $t=0$.  This distinguishes the current situation from
the elliptic case, in which the constant is continuous at $t=0$; see,
for instance \cite[Proposition 2.3]{bakry-sobolev}.  This
fact was initially noted for the Heisenberg group $\heis$ in
\cite{driver-melcher}, and the proof here is similar to the one found
there.

\newcommand{\Kopt}{K_{\mathrm{opt}}}

\begin{proposition}
  For $t \ge 0$, let
  \begin{equation}\label{Kopt}
    \Kopt(t) := \sup \left\{ 
    \frac{\abs{(\grad P_t f)(g)}}{P_t (\abs{\grad f})(g)} : f \in \funcclass, g \in G , P_t (\abs{\grad
        f})(g) \ne 0 \right\}
  \end{equation}
  Then $\Kopt(0) = 1$, and for all $t > 0$, $\Kopt(t) \equiv \Kopt >
  1$ is independent of $t$, so that $\Kopt(t)$ is discontinuous at
  $t=0$.  In particular, \mbox{$\Kopt \ge \sqrt{\frac{3n+5}{3n+1}}$}.
\end{proposition}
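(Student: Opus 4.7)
The plan is to establish the three claims in sequence: $\Kopt(0)=1$, the $t$-independence on $t>0$, and the quantitative lower bound.

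For $\Kopt(0)=1$, this is immediate: since $P_0=\mathrm{Id}$, both numerator and denominator in (\ref{Kopt}) equal $\abs{\grad f(g)}$, so the ratio is identically $1$.

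For the $t$-independence on $t>0$, I would use the dilation identities already stated: $X_i(f\circ\varphi_\alpha)=\alpha(X_i f)\circ\varphi_\alpha$ and $P_t(f\circ\varphi_\alpha)=(P_{\alpha^2 t}f)\circ\varphi_\alpha$. Replacing $f$ by $f\circ\varphi_\alpha$ in (\ref{Kopt}) and cancelling the common factor of $\alpha$ between numerator and denominator yields
\begin{equation*}
\frac{\abs{\grad P_t(f\circ\varphi_\alpha)(g)}}{P_t(\abs{\grad(f\circ\varphi_\alpha)})(g)}
=\frac{\abs{\grad P_{\alpha^2 t}f(\varphi_\alpha(g))}}{P_{\alpha^2 t}(\abs{\grad f})(\varphi_\alpha(g))}.
\end{equation*}
Since $\varphi_\alpha$ is a bijection of $G$ and $f\mapsto f\circ\varphi_\alpha$ preserves $\funcclass$ (the dilation rescales $d(0,g)$ by a factor of $\alpha$, so the growth condition persists after relabelling $M$ and $a$), taking suprema shows $\Kopt(t)=\Kopt(\alpha^2 t)$ for every $\alpha>0$. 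Choosing $\alpha=t^{-1/2}$ gives $\Kopt(t)=\Kopt(1)=:\Kopt$.

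For the lower bound $\Kopt\ge\sqrt{(3n+5)/(3n+1)}$, I would specialise to $g=0$ and $t=1$. By Proposition \ref{commute}, $(\grad P_1f)(0)=(P_1\hat{\grad}f)(0)$, so the relevant ratio becomes
\begin{equation*}
\frac{\abs{\int_G(\hat{\grad}f)\,p_1\,d\haar}}{\int_G\abs{\grad f}\,p_1\,d\haar}.
\end{equation*}
Adapting the strategy of Driver--Melcher \cite{driver-melcher} to the H-type setting, I would exhibit an explicit low-degree polynomial test function $f\in\funcclass$---coupling the $x$ and $z$ variables so that $\grad f$ and $\hat{\grad}f=\grad f-J_{\grad_z f}x$ differ substantially on the bulk of $p_1$'s mass---and compute both integrals by means of Gaussian-type moments of $p_1$ (readable from (\ref{pt-formula}), or obtainable by pairing $Lp_t=\partial_t p_t$ against suitable polynomials). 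The value $(3n+5)/(3n+1)$ should emerge from a combination of $p_1$-moments of the forms $\int\abs{k_x}^2 p_1$, $\int\abs{k_z}^2 p_1$, and $\int\abs{k_x}^4 p_1$; for $n=1$ the bound collapses to $\sqrt{2}$, matching the Heisenberg value from \cite{driver-melcher}, which provides a useful consistency check. Since $\sqrt{(3n+5)/(3n+1)}>1$, this simultaneously gives $\Kopt>1$ and therefore the discontinuity of $\Kopt(t)$ at $t=0$.

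The main obstacle is the third step: isolating the precise polynomial test function that realises the stated bound and keeping the moment bookkeeping clean. The first two steps are routine manipulations of the scaling and commutation structure already used elsewhere in the paper.
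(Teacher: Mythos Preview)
Your first two steps match the paper's argument exactly.  The third step, however, is missing two concrete ingredients that the paper uses and that your outline does not supply.

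First, the denominator $P_1(\abs{\grad f})(0)=\int_G\abs{\grad f}\,p_1\,d\haar$ involves a square root and is \emph{not} a polynomial moment of $p_1$; no combination of $\int\abs{k_x}^2 p_1$, $\int\abs{k_z}^2 p_1$, $\int\abs{k_x}^4 p_1$ will produce it.  The paper deals with this by first applying Cauchy--Schwarz, $P_t(\abs{\grad f})(0)\le\bigl(P_t(\abs{\grad f}^2)(0)\bigr)^{1/2}$, so that the quantity to be bounded below becomes
\[
k_2(t):=\frac{\abs{(\grad P_t f)(0)}^2}{P_t(\abs{\grad f}^2)(0)}\le k(t)^2\le \Kopt^2,
\]
and now both numerator and denominator are values of $P_t$ on polynomials.

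Second, specialising to $t=1$ forfeits the optimisation that actually produces $(3n+5)/(3n+1)$.  The paper takes the explicit test function $f(x,z)=x^1+z^1 x^2$ (choosing the basis so that $J_{u_1}e_1=e_2$) and computes $P_t f$ and $P_t(\abs{\grad f}^2)$ at \emph{general} $t$ via the terminating series $P_t=\sum_{k\ge 0}\frac{t^k}{k!}L^k$ on polynomials---no heat-kernel integrals are ever evaluated.  This yields $k_2(t)=(1+t)^2/\bigl(1-2t+(3n+2)t^2\bigr)$, which is maximised at $t_{\max}=2/(3n+3)$ with value $(3n+5)/(3n+1)$.  At $t=1$ one gets only $k_2(1)=4/(3n+1)\le 1$, which is useless as a lower bound.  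You could in principle recover the correct bound at $t=1$ by replacing $f$ with $f\circ\varphi_{t_{\max}^{1/2}}$, but you have not indicated this, and computing the resulting $p_1$-integrals directly from (\ref{pt-formula}) would be considerably more work than the paper's two-line polynomial calculation.
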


\begin{proof}
  It is obvious that $\Kopt(0) = 1$.
  
  As before, by the left invariance of $P_t$ and $\grad$, it suffices
  to take $g=0$ on the right side of (\ref{Kopt}).  To show
  independence of $t > 0$, fix $t, s > 0$.  If $f \in \funcclass$,
  then $\tilde{f} := f \circ \varphi_{s^{1/2}}^{-1} \in \funcclass$
  and $f = \tilde{f} \circ \varphi_{s^{1/2}}$.  Then
  \begin{align*}
    \frac{\abs{(\grad P_t f)(0)}}{P_t (\abs{\grad f})(0)} &= 
    \frac{\abs{(\grad P_t (\tilde{f} \circ
        \varphi_{s^{1/2}}))(0)}}{P_t \left(\abs{\grad (\tilde{f} \circ
        \varphi_{s^{1/2}})}\right)(0)} \\
    &=     \frac{\abs{(\grad (P_{st} \tilde{f}) \circ
        \varphi_{s^{1/2}})(0)}}{P_t \left(s^{1/2} \abs{\grad \tilde{f}} \circ
        \varphi_{s^{1/2}}\right)(0)} \\
    &=     \frac{s^{1/2}\abs{(\grad P_{st}
        \tilde{f})(\varphi_{s^{1/2}}(0))}}
         {s^{1/2} P_{st} \left(\abs{\grad
             \tilde{f}}\right)(\varphi_{s^{1/2}}(0))}
         \le \Kopt(st).
  \end{align*}
  Taking the supremum over $f$ shows that $\Kopt(t) \le \Kopt(st)$.
  $s$ was arbitrary, so $\Kopt(t)$ is constant for $t > 0$.  

  In order to bound the constant, we explicitly compute a related ratio for
  a particular choice of function $f$.  The function used is an obvious
  generalization of the example used in \cite{driver-melcher} for the
  Heisenberg group $\heis$.

  Fix a unit vector $u_1$ in the center of $G$, i.e. $u_1 \in 0 \times
  \R^m \subset \R^{2n+m}$.  We note that the operator $L$ and the norm
  of the gradient $\abs{\grad f}^2 = \frac{1}{2}(L(f^2) - 2 f L f)$
  are independent of the orthonormal basis $\{e_i\}$ chosen to define
  the vector fields $\{X_i\}$, so without loss of generality we
  suppose that $J_{u_1} e_1 = e_2$.  Then take
  \begin{align*}
    f(x,z) &:= \inner{x}{e_1} + \inner{z}{u_1}\inner{x}{e_2} = x^1 +
    z^1 x^2 \\
    k(t) &:= \frac{\abs{(\grad P_t f)(0)}}{P_t(\abs{\grad f})(0)}.
  \end{align*}
  Note that $k(t) \le \Kopt$ for all $t$.  By the Cauchy-Schwarz
  inequality,
  \begin{equation*}
    k(t)^2 \ge k_2(t) := \frac{\abs{(\grad P_t f)(0)}^2}{P_t\left(\abs{\grad f}^2\right)(0)}.
  \end{equation*}
  Since $f$ is a polynomial, we can compute $P_t f$ by the formula
  $P_t f = f + \frac{t}{1!} L f + \frac{t^2}{2!} L^2 f + \cdots$ since
  the sum terminates after a finite number of terms (specifically,
  two).  The same is true of $\abs{\grad f}^2$, which is also a
  polynomial (three terms are needed).  The formulas
  (\ref{Xi-formula}) are helpful in carrying out this tedious but
  straightforward computation.
  We find
  \begin{equation*}
    k_2(t) = \frac{(1+t)^2}{1-2t+(3n+2)t^2}
  \end{equation*}
  which, by differentiation, is maximized at $t_{\mathrm{max}} =
  \frac{2}{3n+3}$, with $k_2(t_{\mathrm{max}}) = \frac{3n+5}{3n+1}$.
  Since $\Kopt \ge k(t_{\mathrm{max}}) \ge
  \sqrt{k_2(t_{\mathrm{max}})} = \sqrt{\frac{3n+5}{3n+1}}$, this is
  the desired bound.
\end{proof}

\section{Consequences and possible extensions}

Section 6 of \cite{bbbc-jfa} gives several important consequences of
the gradient inequality (\ref{intro-grad-ineq}).  The proofs given
there are generic (see their Remark 6.6); with Theorem
\ref{main-grad-theorem} in hand, they go through without change in the
case of H-type groups.  These consequences include:
\begin{itemize}
\item Local Gross-Poincar\'e inequalities, or $\varphi$-Sobolev
  inequalities;
\item Cheeger type inequalities; and
\item Bobkov type isoperimetric inequalities.
\end{itemize}
We refer the reader to \cite{bbbc-jfa} for the statements and proofs of
these theorems, and many references as well.




  

It would be very useful to extend the gradient inequality
(\ref{intro-grad-ineq}) to a more general class of groups, such as the
nilpotent Lie groups.  However, this is likely to require a proof
which is divorced from the heat kernel estimates
(\ref{p1-estimate}--\ref{gradzp1-estimate}).  Such precise estimates
are currently not known to hold in more general settings, and could be
difficult to obtain.  A key difficulty is the lack of a convenient
explicit heat kernel formula like (\ref{pt-formula}).

The author would like to express his sincere thanks to his advisor,
Bruce Driver, for a great many helpful discussions during the
preparation of this article.  The author would also like to thank the
anonymous referee for several helpful corrections and comments,
notably the suggestion to include Figure \ref{phi-fig}.  This research
was supported in part by NSF Grants DMS-0504608 and DMS-0804472, as
well as an NSF Graduate Research Fellowship.


\bibliographystyle{plain}
\bibliography{allpapers}

\end{document}